\newcounter{mycount}
\theoremstyle{plain}
\newtheorem{theorem}{Theorem}[section]
\newtheorem{proposition}[theorem]{Proposition}
\newtheorem{lemma}[theorem]{Lemma}
\newtheorem{corollary}[theorem]{Corollary}
\theoremstyle{definition}
\newtheorem{definition}[theorem]{Definition}
\newtheorem{assumption}[theorem]{Assumption}
\theoremstyle{remark}
\newtheorem{remark}[theorem]{Remark}
\newtheorem{Example}[theorem]{Example}
\newtheorem{Step}{Step}
\title{Fundamental Convergence Analysis of Sharpness-Aware Minimization}
\newcommand{\set}[1]{\left\{#1\right\}}
\def\ve{\varepsilon}
\def\tilde{\widetilde}
\def\dom{{\rm dom}\,}
\def\B{\mathbb B}
\def\ox{\overline{x}}
\def\ve{\varepsilon}
\def\epsilon{\varepsilon}
\def\ox{\bar{x}}
\def\C{\mathcal{C}}
\def\dom{\mbox{\rm dom}\,}
\def\st{\stackrel}
\def \N{{\rm I\!N}}
\def \R{{\rm I\!R}}
\newcommand{\dotproduct}[1]{\left\langle#1\right\rangle}
\newcommand{\brac}[1]{\left(#1\right)}
\newcommand{\sbrac}[1]{\left[#1\right]}
\newcommand{\abs}[1]{\left|#1\right|}
\newcommand{\norm}[1]{\left\|#1\right\|}
\author{%
  Pham Duy Khanh \\
%  Department of Mathematics\\
  Ho Chi Minh City University of Education\\
 % Ho Chi Minh City, Vietnam\\
  \texttt{khanhpd@hcmue.edu.vn} \\
  % examples of more authors
  \And
  Hoang-Chau Luong \\
 % Department of Computer Science\\
  VNU-HCM University of Science \\
 % Ho Chi Minh City, Vietnam\\
  \texttt{lhchau20@apcs.fitus.edu.vn} \\
  \AND
  Boris S. Mordukhovich \\
 % Department of Mathematics\\
  Wayne State University\\
%  Detroit, MI 48202 \\
  \texttt{boris@math.wayne.edu} \\
  \And
  Dat Ba Tran \\
  %Department of Mathematics\\
  Wayne State University\\
  %Detroit, MI 48202 \\
  \texttt{tranbadat@wayne.edu} \\
}
\begin{document}

\maketitle

\begin{abstract}
  The paper investigates the fundamental convergence properties of Sharpness-Aware Minimization (SAM), a recently proposed gradient-based optimization method \citep{foret21} that significantly improves the generalization of deep neural networks. The convergence properties including the stationarity of accumulation points, the convergence of the sequence of gradients to the origin, the sequence of function values to the optimal value, and the sequence of iterates to the optimal solution are established for the method. The universality of the provided convergence analysis based on inexact gradient descent frameworks \citet{kmt23.1} allows its extensions to efficient normalized versions of SAM such as F-SAM  \citep{li2024friendly}, VaSSO  \citep{li23vasso}, RSAM \citep{liu22}, and to the unnormalized versions of SAM such as USAM \citep{maksym22}. Numerical experiments are conducted on classification tasks using deep learning models to confirm the practical aspects of our analysis.
\end{abstract}

\section{Introduction}\label{introduction}
This paper concentrates on optimization methods for solving the standard optimization problem
\begin{align}
{\rm minimize}\quad f(x)\text{ subject to }x\in\R^n,
\end{align}
where $f:\R^n\rightarrow\R$ is a continuously differentiable ($\C^1$-smooth) function. We study the convergence behavior of the gradient-based optimization algorithm \textit{Sharpness-Aware Minimization} \citep{foret21} together with its efficient practical variants \citep{liu22,li23vasso,maksym22}. Given an initial point $x^1\in\R^n,$ the original iterative procedure of SAM is designed as follows
\begin{align}\label{SAM intro}
   \quad x^{k+1}=x^k-t\nabla f\brac{x^k+\rho\frac{\nabla f(x^k)}{\norm{\nabla f(x^k)}}}
\end{align}
for all $k\in\N$, where $t>0$ and $\rho>0$ are respectively the \textit{stepsize} (in other words, the learning rate) and \textit{perturbation radius}. The main motivation for the construction algorithm is that by making the backward step $x^k+\rho\frac{\nabla f(x^k)}{\norm{\nabla f(x^k)}}$, it avoids minimizers with large sharpness, which is usually poor for the generalization of deep neural networks as shown in \citep{keskar17}.

\subsection{Lack of convergence properties for SAM due to constant stepsize}\label{sec lack}

The consistently high efficiency of SAM has driven a recent surge of interest in the analysis of the method. The convergence analysis of SAM is now a primary focus on its theoretical understanding with several works being developed recently (e.g., \citet{ahn23d, maksym22, dai23, si23}). However, none of the aforementioned studies have addressed the fundamental convergence properties of SAM, which are outlined below where the stationary accumulation point in (2) means that every accumulation point $\bar x$ of the iterative sequence $\set{x^k}$ satisfies the condition $\nabla f(\bar x)=0$.

\begin{table}[H]
\centering
\begin{tabular}{|cl|}
\hline
(1) & $\displaystyle{\liminf_{k\rightarrow \infty}}\norm{\nabla f(x^k)}=0$ \\
(2)  & stationary accumulation point\\
   (3) & $\displaystyle{\lim_{k\rightarrow \infty}}\norm{\nabla f(x^k)}=0$\\
   (4)  &  $\displaystyle{\lim_{k\rightarrow \infty}}f(x^k)= f(\bar x)$ with $\nabla f(\bar x)=0$\\
   (5)& $\set{x^k}$ converges to some $\bar x$ with $\nabla f(\bar x)=0$\\\hline
\end{tabular}
\vspace{0.2cm}
\caption{Fundamental convergence properties of smooth optimization methods }
\label{tab:fundamental}
\end{table}

The relationship between the properties above is summarized as follows:
\begin{center}
(1)$\overset{\set{\norm{x^k}}\not\rightarrow\infty}{\Longleftarrow}$ (2) $\Longleftarrow$ (3) $\Longleftarrow$ (5) $\Longrightarrow$ (4).
\end{center}

The aforementioned convergence properties are standard and are analyzed by various smooth optimization methods including gradient descent-type methods, Newton-type methods, and their accelerated versions together with nonsmooth optimization methods under the usage of subgradients. The readers are referred to \citet{bertsekasbook,nesterovbook,polyakbook} and the references therein for those results. The following recent publications have considered various types of convergence rates for the sequences generated by SAM as outlined below:

 (i) \citet[Theorem~1]{dai23} 
 \begin{align*}
 f(x^k)-f^*\le (1-t\mu (2-Lt))^k (f(x^0)-f^*)+ \frac{tL^2\rho^2}{2\mu (2-Lt)}
 \end{align*}
 where $L$ is the Lipschitz constant of $\nabla f$, and where $\mu$ is the constant of strong convexity constant for $f$.
 
 (ii) \citet[Theorems~3.3,~3.4]{si23}
 \begin{align*}
 \frac{1}{k}\sum_{i=1}^k \norm{\nabla f(x^i)}^2&=\mathcal{O}\brac{\frac{1}{k}+\frac{1}{\sqrt{k}}} \quad\text{ and } \quad   \frac{1}{k}\sum_{i=1}^k \norm{\nabla f(x^i)}^2=\mathcal{O}\brac{\frac{1}{k}}+L^2\rho^2,
\end{align*}
 where $L$ is the Lipschitz constant of $\nabla f.$ We emphasize that none of the results mentioned above achieve any fundamental convergence properties listed in Table~\ref{tab:fundamental}. The estimation in (i) only gives us the convergence of the function value sequence to a value close to the optimal one, not the convergence to exactly the optimal value. Additionally, it is evident that the results in (ii) do not imply the convergence of $\set{\nabla f(x^k)}$ to $0$. To the best of our knowledge, the only work concerning the fundamental convergence properties listed in Table~\ref{tab:fundamental} is \citet{maksym22}. However, the method analyzed in that paper is unnormalized SAM (USAM), a variant of SAM with the norm being removed in the iterative procedure \eqref{SAM}. Recently, \citet{dai23} suggested that USAM has different effects in comparison with SAM in both practical and theoretical situations, and thus, they should be addressed separately. This observation once again highlights the necessity for a fundamental convergence analysis of SAM and its normalized variants.

Note that, using exactly the iterative procedure \eqref{SAM intro}, SAM does not achieve the convergence for either $\set{x^k}$, or $\set{f(x^k)}$, or $\set{\nabla f(x^k)}$ to  the optimal solution, the optimal value, and the origin, respectively. It is illustrated by Example~\ref{exam SAM constant not conver} below dealing with quadratic functions. This calls for the necessity of employing an alternative stepsize rule for SAM. Scrutinizing the numerical experiments conducted for SAM and its variants (e.g., \citet[Subsection~C1]{foret21}, \citet[Subsection~4.1]{li23vasso}), we can observe that in fact the constant stepsize rule is not a preferred strategy. Instead, the cosine stepsize scheduler from \citep{los16}, designed to decay to zero and then restart after each fixed cycle, emerges as a more favorable approach. This observation motivates us to analyze the method under diminishing stepsize, which is standard and employed in many optimization methods including the classical gradient descent methods together with its incremental and stochastic counterparts \citep{bertsekas00}. Diminishing step sizes also converge to zero as the number of iterations increases, a condition satisfied by the practical cosine step size scheduler in each cycle.

\subsection{Our Contributions} 

\subsubsection*{Convergence of SAM and normalized variants}

We establish fundamental convergence properties of SAM in various settings. In the convex case, we consider the perturbation radii to be variable and bounded. This analysis encompasses the practical implementation of SAM with a constant perturbation radius. The results in this case are summarized in Table~\ref{table convex case}.
\begin{table}[H]
\centering
\begin{tabular}{|l|l|l|l|}
\hline
\multicolumn{1}{|c|}{\textbf{Classes of function}} &  \multicolumn{1}{c|}{\textbf{Results}}  \\ \hline
General setting  & $\liminf\nabla f(x^k)= 0$ \\ \hline
Bounded minimizer set& stationary accumulation point \\ \hline
Unique minimizer& $\set{x^k}$ is convergent \\ \hline
\end{tabular}\vspace{0.1cm}
\caption{Convergence properties of SAM for convex functions in Theorem~\ref{theo SAM constant}}\label{table convex case}
\end{table}
\vspace{-0.7cm}
In the nonconvex case, we present a unified convergence analysis framework that can be applied to most variants of SAM, particularly recent efficient developments such as VaSSO \citep{li23vasso}, F-SAM \citep{li2024friendly}, and RSAM \citep{liu22}. We observe that all these methods can be viewed as inexact gradient descent (IGD) methods with absolute error. This version of IGD has not been previously considered, and its convergence analysis is significantly more complex than the one in \citet{kmt23.1,kmt23.2,kmpt23.3,kmt23.4}, as the function value sequence generated by the new algorithm may not be decreasing. This disrupts the convergence framework for monotone function value sequences used in the aforementioned works. To address this challenge, we adapt the analysis for algorithms with nonmonotone function value sequences from \citet{li23}, which was originally developed for random reshuffling algorithms, a context entirely different from ours.

We establish the convergence of IGD with absolute error when the perturbation radii decrease at an \textit{arbitrarily slow rate}. Although the analysis of this general framework does not theoretically cover the case of a constant perturbation radius, it poses no issues for the practical implementation of these methods, as discussed in Remark~\ref{rmk no harm}. A summary of our results in the nonconvex case is provided in the first part of Table~\ref{tab:convergence SAM and USAM}.
\subsubsection*{Convergence of USAM and unnormalized variants}
\vspace{-0.2cm}

Our last theoretical contribution in this paper involves a refined convergence analysis of USAM in \citet{maksym22}. In the general setting, we address functions satisfying the $L$-descent condition \eqref{descent condition}, which is even weaker than the Lipschitz continuity of $\nabla f$ as considered in \citet{maksym22}. The summary of the convergence analysis for USAM is given in the second part of Table~\ref{tab:convergence SAM and USAM}.

As will be discussed in Remark~\ref{rmk comparison USAM}, our convergence properties for USAM use weaker assumptions and cover a broader range of applications in comparison with those analyzed in \citep{maksym22}. Furthermore, the universality of the conducted analysis allows us to verify all the convergence properties for the extragradient method \citep{korpelevich76} that has been recently applied in \citep{lin20} to large-batch training in deep learning.
\vspace{-0.2cm}

\begin{table}[H]
\centering
\begin{tabular}{|l|l|l|l|} 
\hline
\multicolumn{2}{|c|}{\textbf{SAM and normalized variants}}& \multicolumn{2}{|c|}{\textbf{USAM and unnormalized variants}}\\ 
\hline
\multicolumn{1}{|c|}{\textbf{Classes of functions}} & \multicolumn{1}{|c|}{\textbf{Results}} & \multicolumn{1}{|c|}{\textbf{Classes of functions}} & \multicolumn{1}{|c|}{\textbf{Results}} \\ 
\hline
General setting & $\lim\nabla f(x^k)= 0$   & General setting & stationary accumulation point \\\hline  
General setting & $\lim f(x^k)= f^*$  & General setting & $\lim f(x^k)= f^*$ \\\hline
KL property & $\set{x^k}$ is convergent & $\nabla f$ is Lipschitz& $\lim \nabla f(x^k)= 0$ \\\hline
&& KL property & $\set{x^k}$ is convergent \\ 
\hline
\end{tabular}\vspace{0.1cm}
\caption{Convergence properties of SAM together with normalized variants (Corollary~\ref{coro general}, Appendix~\ref{versions}), and USAM together with unnormalized variants (Theorem~\ref{theorem USAM EG})}\label{tab:convergence SAM and USAM}
\end{table}

\subsection{Importance of Our Work}

Our work develops, for the first time in the literature, a fairly comprehensive analysis of the fundamental convergence properties of SAM and its variants. The developed approach addresses general frameworks while being based on the analysis of the newly proposed inexact gradient descent methods. Such an approach can be applied in various other circumstances and provides useful insights into the convergence understanding of not only SAM and related methods but also many other numerical methods in smooth, nonsmooth, and derivative-free optimization.

\subsection{Related Works}

\textbf{Variants of SAM}. There have been several publications considering some variants to improve the performance of SAM. Namely, \citet{kwon21} developed the Adaptive Sharpness-Aware Minimization (ASAM) method by employing the concept of normalization operator. \citet{du22} proposed the Efficient Sharpness-Aware Minimization (ESAM) algorithm by combining stochastic weight perturbation and sharpness-sensitive data selection techniques. \citet{liu22} proposed a novel Random Smoothing-Based SAM method called RSAM that improves the approximation quality in the backward step. Quite recently, \citet{li23vasso} proposed another approach called Variance Suppressed Sharpness-aware Optimization (VaSSO), which perturbed the backward step by incorporating information from the previous iterations. As \citet{li2024friendly} identified noise in stochastic gradient as a crucial factor in enhancing SAM's performance, they proposed Friendly Sharpness-Aware Minimization (F-SAM) which perturbed the backward step by extracting noise from the difference between the stochastic gradient and the expected gradient at the current step. Two efficient algorithms, AE-SAM and AE-LookSAM, are also proposed in \citet{jiang23}, by employing adaptive policy based on the loss landscape geometry.

\textbf{Theoretical Understanding of SAM}. Despite the success of SAM in practice, a theoretical understanding of SAM was absent until several recent works. \citet{barlett22} analyzed the convergence behavior of SAM for convex quadratic objectives, showing that for most random initialization, it converges to a cycle that oscillates between either side of the minimum in the direction with the largest curvature. \citet{ahn23d} introduces the notion of $\varepsilon$-approximate flat minima and investigates the iteration complexity of optimization methods to find such approximate flat minima. As discussed in Subsection~\ref{sec lack}, \citet{dai23} considers the convergence of SAM with constant stepsize and constant perturbation radius for convex and strongly convex functions, showing that the sequence of iterates stays in a neighborhood of the global minimizer while \citep{si23} considered the properties of the gradient sequence generated by SAM in different settings.

\noindent\textbf{Theoretical Understanding of USAM}. This method was first proposed by \citet{maksym22} with fundamental convergence properties being analyzed under different settings of convex and nonconvex and optimization. Analysis of USAM was further conducted in \citet{behdin16} for a linear regression model, and in \citet{argawal23} for a quadratic regression model. Detailed comparison between SAM and USAM, which indicates that they exhibit different behaviors, was presented in the two recent studies by \citet{compa23} and \citet{dai23}. During the final preparation of the paper, we observed that the convergence of USAM can also be deduced from \citet{manga97}, though under some additional assumptions, including the boundedness of the gradient sequence.

\section{Preliminaries} \label{sec prelim}

First we recall some notions and notations frequently used in the paper. All our considerations are given in the space $\R^n$ with the Euclidean norm $\|\cdot\|$. 
As always, $\N:=\{1,2,\ldots\}$ signifies the collections of natural numbers. The symbol $x^k\st{J}{\to}\ox$ means that $x^k\to\ox$ as $k\to\infty$ with $k\in J\subset\N$. Recall that $\bar x$ is a \textit{stationary point} of a $\mathcal{C}^1$-smooth function $f\colon\R^n\rightarrow\R$ if $\nabla f(\bar x)=0$. A function $f:\R^n\rightarrow {\R}$ is said to posses a Lipschitz continuous gradient with the uniform constant $L>0$, or equivalently it belongs to the class $\C^{1,L}$, if we have the estimate
\begin{align}\label{Lips def}
\norm{\nabla f(x)-\nabla f(y)}\le L\norm{x-y}\;\text{ for all }\;x,y\in\R^n.
\end{align}
This class of function enjoys the following property called the {\em $L$-descent condition} (see, e.g., \citet[Lemma~A.11]{solodovbook} and \citet[Lemma~A.24]{bertsekasbook}):
\begin{align}\label{descent condition}
f(y)\le f(x)+\dotproduct{\nabla f(x),y-x}+\dfrac{L}{2}\norm{y-x}^2
\end{align}
for all $x,y\in\R^n.$
Conditions \eqref{Lips def} and \eqref{descent condition} are equivalent to each other when $f$ is convex, while the equivalence fails otherwise. A major class of functions satisfying the $L$-descent condition but not having the Lipschitz continuous gradient is given by \citet[Section~2]{kmt23.1} as $f(x):=\dfrac{1}{2}\dotproduct{Ax,x}+\dotproduct{b,x}+c-h(x),$
where $A$ is an $n\times n$ matrix, $b\in\R^n$, $c\in\R$ and $h:\R^n\rightarrow \R$ is a smooth convex function whose gradient is not Lipschitz continuous. There are also circumstances where a function has a Lipschitz continuous gradient and satisfies the descent condition at the same time, but the  Lipschitz constant is larger than the one in the descent condition.

Our convergence analysis of the methods presented in the subsequent sections benefits from the {\em Kurdyka-\L ojasiewicz $(KL)$ property} taken from \citet{attouch10}.

\begin{definition} [Kurdyka-\L ojasiewicz property] \rm \label{KL ine}\rm We say that a smooth function $f:\R^n\rightarrow {\R}$ enjoys the \textit{KL property} at $\bar x\in\dom \partial f$ if there exist $\eta\in (0,\infty]$, a neighborhood $U$ of $\bar x$, and a desingularizing concave continuous function $\varphi:[0,\eta)\rightarrow[0,\infty)$ such that $\varphi(0)=0$, $\varphi$ is $\mathcal{C}^1$-smooth on $(0,\eta)$, $\varphi'>0$ on $(0,\eta)$, and for all $x\in U$ with $0<f(x)-f(\bar x)<\eta$, we have 
\begin{align}\label{KL 2}
\varphi'(f(x)-f(\bar x))\norm{\nabla f(x)}\ge 1.
\end{align}
\end{definition}

\begin{remark}\rm\label{algebraic} It has been realized that the KL property is satisfied in broad settings. In particular, it holds at every {\em nonstationary point} of $f$; see \citet[Lemma~2.1~and~Remark~3.2(b)]{attouch10}. Furthermore, it is proved at the seminal paper \citet{lojasiewicz65} that any analytic function $f:\R^n\rightarrow\R$ satisfies the KL property on $\R^n$ with $\varphi(t)~=~Mt^{1-q}$ for some $q\in [0,1)$. Typical functions that satisfy the KL property are {\em semi-algebraic} functions and in general, functions {\em definable in o-minimal structures}; see \citet{attouch10,attouch13,kurdyka}.
\end{remark}
We utilize the following assumption on the desingularizing function in Definition~\ref{KL ine}, which is employed in \citep{li23}. The satisfaction of this assumption for a general class of desingularizing functions is discussed in Remark~\ref{rmk satisfaction}.
\begin{assumption}\label{assu desi}
There is some $C>0$ such that whenever $x,y\in (0,\eta)$ with $x+y<\eta$, it holds that
$$C[\varphi'(x+y)]^{-1}\le (\varphi'(x))^{-1}+(\varphi'(y))^{-1}.$$ 
\end{assumption}

\section{SAM and normalized variants}\label{sec SAM normal}

\subsection{Convex case}
We begin this subsection with an example that illustrates SAM's inability to achieve the convergence of the sequence of iterates to an optimal solution of strongly convex quadratic functions by using a constant stepsize. This emphasizes the necessity of avoiding this type of stepsize in our subsequent analysis.
\begin{Example}[SAM with constant stepsize and constant perturbation radius does not converge]\rm  \label{exam SAM constant not conver} Let the sequence $\set{x^k}$ be generated by SAM in \eqref{SAM intro} applied to the strongly convex quadratic function $f(x)=\frac{1}{2}\dotproduct{Ax,x}-\dotproduct{b,x}$, where $A$ is an $n\times n$ symmetric, positive-definite matrix and $b\in\R^n.$ Then for any fixed small constant perturbation radius and for some small constant stepsize together with an initial point close to the solution, the sequence $\set{x^k}$ generated by this algorithm does not converge to the optimal solution. 
\end{Example}
\begin{figure}
    \centering
    \includegraphics[scale=0.3]{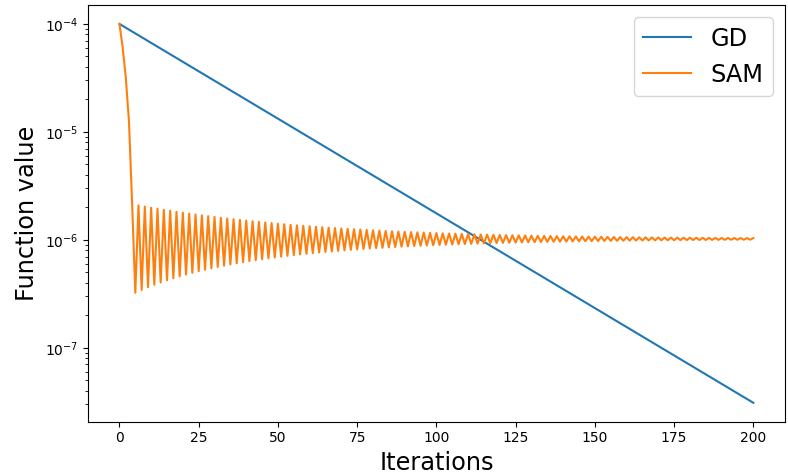}
    \caption{SAM with constant stepsize does not converge to optimal solution}
    \label{fig:GD vs SAM}
\end{figure}
The details of the above example are presented in Appendix~\ref{appendix exam SAM constant not conver}. Figure~\ref{fig:GD vs SAM} gives an empirical illustration for Example~\ref{exam SAM constant not conver}. The graph shows that, while the sequence generated by GD converges to $0$, the one generated by SAM gets stuck at a different point.

As the constant stepsize does not guarantee the convergence of SAM, we consider another well-known stepsize called diminishing (see \eqref{parameter constant rho}). The following result provides the convergence properties of SAM in the convex case for that type of stepsize.
\begin{theorem}\label{theo SAM constant} Let $f:\R^n\rightarrow\R$ be a smooth convex function whose gradient is Lipschitz continuous with constant $L>0$. Given any initial point $x^1\in\R^n$, let $\set{x^k}$ be generated by the SAM method with the iterative procedure
\begin{align}\label{iterative SAM}
 x^{k+1}=x^k-t_k\nabla f\brac{x^k+\rho_k\frac{\nabla f(x^k)}{\norm{\nabla f(x^k)}}}
\end{align}
for all $k\in \N$ with nonnegative stepsizes and  perturbation radii satisfying the conditions
\begin{align}\label{parameter constant rho}
\sum_{k=1}^\infty t_k^2<\infty,\; \sum_{k=1}^\infty t_k=\infty,\; \sup_{k\in\N}\rho_k<\infty.
\end{align}
Assume that $\nabla f(x^k)\ne 0$ for all $k\in\N$ and that $\inf_{k\in\N} f(x^k)>-\infty$. Then the following assertions hold:

{\bf(i)} $\displaystyle{\liminf_{k\rightarrow \infty}}\;\nabla f(x^k)=0$.
 
{\bf(ii)} If $f$ has a nonempty bounded level set, then $\set{x^k}$ is bounded, every accumulation point of $\set{x^k}$ is a global minimizer of $f$, and $\set{f(x^k)}$ converges to the optimal value of $f$. If in addition $f$ has a unique minimizer, then the sequence $\set{x^k}$ converges to that minimizer.
\end{theorem}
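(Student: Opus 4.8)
The plan is to treat the SAM update as a \emph{perturbed} gradient step and exploit convexity through the monotonicity of $\nabla f$. Write $g^k := \nabla f\brac{x^k + \rho_k \frac{\nabla f(x^k)}{\norm{\nabla f(x^k)}}}$ and $y^k := x^k + \rho_k\frac{\nabla f(x^k)}{\norm{\nabla f(x^k)}}$, so the iteration reads $x^{k+1} = x^k - t_k g^k$. Two facts drive everything. First, Lipschitz continuity of $\nabla f$ gives the \emph{absolute} error bound $\norm{g^k - \nabla f(x^k)} \le L\rho_k$, whence $\norm{g^k}^2 \le 2\norm{\nabla f(x^k)}^2 + 2L^2\rho_k^2$. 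Second, and crucially, monotonicity of $\nabla f$ applied to the pair $(y^k,x^k)$, together with $y^k - x^k = \rho_k\nabla f(x^k)/\norm{\nabla f(x^k)}$ and $\rho_k \ge 0$, yields $\dotproduct{g^k, \nabla f(x^k)} \ge \norm{\nabla f(x^k)}^2$. Feeding both into the $L$-descent condition \eqref{descent condition} at $x^k, x^{k+1}$ produces
\begin{align*}
f(x^{k+1}) \le f(x^k) - t_k\brac{1 - Lt_k}\norm{\nabla f(x^k)}^2 + L^3 t_k^2\rho_k^2.
\end{align*}

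For part (i), since $\sum t_k^2 < \infty$ forces $t_k \to 0$, we have $1 - Lt_k \ge \tfrac12$ for all large $k$; summing the displayed inequality and using $\inf_k f(x^k) > -\infty$ together with $\sum t_k^2\rho_k^2 \le (\sup_k\rho_k^2)\sum t_k^2 < \infty$ gives $\sum_k t_k\norm{\nabla f(x^k)}^2 < \infty$. Because $\sum_k t_k = \infty$, this is incompatible with $\norm{\nabla f(x^k)}$ staying bounded away from zero, so $\liminf_{k\to\infty}\norm{\nabla f(x^k)} = 0$. For part (ii), the same inequality also reads $f(x^{k+1}) \le f(x^k) + \beta_k$ with $\beta_k := L^3 t_k^2\rho_k^2$ summable, so telescoping shows $\sup_k f(x^k) < \infty$ and the whole sequence lies in one level set of $f$. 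Since $f$ is convex with a nonempty bounded level set, all its level sets are bounded and $f$ attains its minimum $f^*$ on a nonempty compact set; hence $\set{x^k}$ is bounded. Boundedness and $\liminf\norm{\nabla f(x^k)} = 0$ produce a subsequence $x^{k_j}\to\bar x$ with $\nabla f(\bar x) = 0$, so $\bar x$ is a global minimizer and $f(x^{k_j})\to f^*$. The almost-monotonicity $f(x^{k+1}) \le f(x^k) + \beta_k$ with $\sum\beta_k < \infty$ forces $\set{f(x^k)}$ to converge, and matching the subsequential limit gives $f(x^k)\to f^*$. Consequently every accumulation point is a global minimizer, and when the minimizer is unique a bounded sequence whose accumulation points all coincide with it must converge to it.

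I expect the main obstacle to be control of the perturbation term. The naive estimate $\dotproduct{\nabla f(x^k), g^k} \ge \norm{\nabla f(x^k)}^2 - L\rho_k\norm{\nabla f(x^k)}$ leaves a term $t_kL\rho_k\norm{\nabla f(x^k)}$ that is \emph{not} summable when $\rho_k$ is merely bounded (e.g.\ constant), and a standard quasi-Fej\'er / summable-error argument for $\set{x^k}$ fails for the same reason, since $\sum t_k\rho_k$ may diverge. The monotonicity identity $\dotproduct{g^k,\nabla f(x^k)}\ge\norm{\nabla f(x^k)}^2$ is precisely what removes this obstruction in the convex setting, and replacing the quasi-Fej\'er route by the ``bounded level set $+$ almost-monotone function values'' route is what lets the whole argument go through under only $\sup_k\rho_k < \infty$.
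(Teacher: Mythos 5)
Your proposal is correct and follows essentially the same route as the paper's proof: the absolute error bound $\norm{g^k-\nabla f(x^k)}\le L\rho_k$, the monotonicity estimate $\dotproduct{g^k,\nabla f(x^k)}\ge\norm{\nabla f(x^k)}^2$, the resulting descent inequality with summable additive error $O(t_k^2\rho_k^2)$, summation for (i), and the almost-monotone/level-set argument for (ii). The only cosmetic difference is that you bound $\norm{g^k}^2\le 2\norm{\nabla f(x^k)}^2+2L^2\rho_k^2$ where the paper uses the exact identity $\norm{g^k}^2=\norm{g^k-\nabla f(x^k)}^2-\norm{\nabla f(x^k)}^2+2\dotproduct{g^k,\nabla f(x^k)}$ to get a slightly sharper coefficient; both yield the same conclusions.
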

Due to the space limit, the proof of the theorem is presented in Appendix~\ref{appendix theo SAM constant}.

\subsection{Nonconvex case}

In this subsection, we study the convergence of several versions of SAM from the perspective of the inexact gradient descent methods.

\setcounter{algorithm}{-1}
\begin{algorithm}[H]
\caption{Inexact Gradient Descent (IGD) Methods }\label{IGD}
\setcounter{Step}{-1}
\begin{Step}\rm
Choose some initial point $x^0\in\R^n,$ sequence of errors $\set{\varepsilon_k}\subset (0,\infty)$, and sequence of stepsizes $\set{t_k}\subset(0,\infty).$ For $k=1,2,\ldots,$ do the following
\end{Step}
\begin{Step}\rm 
Set $x^{k+1}=x^k-t_k g^k$ with $\norm{g^k-\nabla f(x^k)}\le \varepsilon_k$.
\end{Step} 
\end{algorithm}
This algorithm is motivated by while being different from the Inexact Gradient Descent methods proposed in \citep{kmpt23.3,kmt23.1, kmt23.4,kmt23.2}. The latter constructions consider relative errors in gradient calculation, while Algorithm~\ref{IGD} uses the absolute ones. This approach is particularly suitable for the constructions of SAM and its normalized variants. The convergence properties of Algorithm~\ref{IGD} are presented in the next theorem.
\begin{theorem}\label{convergence IGD}
Let $f:\R^n\rightarrow\R$ be a smooth function whose gradient is Lipschitz continuous with some constant $L>0$, and let $\set{x^k}$ be generated by the IGD method in Algorithm~\ref{IGD} with stepsizes and errors satisfying the conditions
   \begin{align}\label{parameter general}
\sum_{k=1}^\infty t_k=\infty,\;t_k\downarrow0, \sum_{k=1}^\infty t_k \varepsilon_k<\infty,\;\limsup \varepsilon_k<2.
\end{align}
Assume that $\inf_{k\in\N} f(x^k)>-\infty$. Then the following convergence properties hold:

{\bf(i)} $\nabla f(x^k)\rightarrow0,$ and thus every accumulation point of $\set{x^k}$ is stationary for $f.$

{\bf(ii)} If $\bar x$ is an accumulation point of the sequence $\set{x^k}$, then  $f(x^k)\rightarrow f(\bar x).$ 

{\bf(iii)} Suppose that $f$ satisfies the KL property  at some accumulation point $\bar x$ of $\set{x^k}$ with the desingularizing function $\varphi$ satisfying Assumption~\ref{assu desi}. Assume in addition that 
\begin{align}\label{desing condi}
\sum_{k=1}^\infty t_k\brac{\varphi'\brac{\sum_{i=k}^\infty t_k\varepsilon_k}}^{-1}<\infty,
\end{align} and that $f(x^k) > f(\bar x)$ for sufficiently large $k \in \mathbb{N}$. Then $x^k \rightarrow \bar x$ as $k\rightarrow\infty$. In particular, if $\bar x$ is a global minimizer of $f$, then either $f(x^k) = f(\bar x)$ for some $k \in \mathbb{N}$, or $x^k \rightarrow \bar x$. 
\end{theorem}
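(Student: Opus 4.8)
The plan is to base everything on a single approximate descent inequality and to obtain parts (i)--(iii) as successively sharper consequences. Applying the $L$-descent condition \eqref{descent condition} to $x^{k+1}=x^k-t_kg^k$, using the polarization identity for $\dotproduct{\nabla f(x^k),g^k}$ and the bound $\norm{g^k-\nabla f(x^k)}\le\varepsilon_k$, and recalling that $t_k<1/L$ for large $k$ (since $t_k\downarrow0$), I would derive
\begin{align*}
f(x^{k+1})&\le f(x^k)-\frac{t_k}{2}\norm{\nabla f(x^k)}^2-\frac{t_k}{2}(1-Lt_k)\norm{g^k}^2+\frac{t_k}{2}\varepsilon_k^2\\
&\le f(x^k)-\frac{t_k}{2}\norm{\nabla f(x^k)}^2+\frac{t_k}{2}\varepsilon_k^2.
\end{align*}
Since $\set{\varepsilon_k}$ is bounded and $\sum t_k\varepsilon_k<\infty$, the residual satisfies $\sum_k t_k\varepsilon_k^2<\infty$; telescoping together with $\inf_k f(x^k)>-\infty$ then yields $\sum_k t_k\norm{\nabla f(x^k)}^2<\infty$, and $\sum_k t_k=\infty$ forces $\liminf_k\norm{\nabla f(x^k)}=0$. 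To upgrade this to $\norm{\nabla f(x^k)}\to0$ I would rule out oscillation using the Lipschitz estimate $\abs{\norm{\nabla f(x^{k+1})}-\norm{\nabla f(x^k)}}\le Lt_k\brac{\norm{\nabla f(x^k)}+\varepsilon_k}$: if $\limsup_k\norm{\nabla f(x^k)}=c>0$, then on each interval where the gradient norm climbs from $c/3$ to $2c/3$ it stays above $c/3$ while its net increase forces $\sum t_k$ over that interval to be bounded below, so each such interval contributes a fixed positive amount to $\sum t_k\norm{\nabla f(x^k)}^2$; infinitely many of them contradict summability. Continuity of $\nabla f$ then makes every accumulation point stationary, proving (i).

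For (ii) I would convert the near-monotone inequality into a genuinely monotone one by absorbing the error tail. With $S_k:=\frac12\sum_{i\ge k}t_i\varepsilon_i^2$ (finite and $\to0$), the sequence $u_k:=f(x^k)+S_k$ is eventually nonincreasing and bounded below, hence converges to some $\ell$; as $S_k\to0$, we get $f(x^k)\to\ell$, and evaluating along a subsequence $x^{k_j}\to\bar x$ forces $\ell=f(\bar x)$, so $f(x^k)\to f(\bar x)$.

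Part (iii) is the crux, precisely because $\set{f(x^k)}$ need not be monotone. The plan is to run a Kurdyka--\L ojasiewicz finite-length argument on the \emph{monotone surrogate} $\tilde\Delta_k:=f(x^k)-f(\bar x)+S_k$, which (by the assumption $f(x^k)>f(\bar x)$ and part (ii)) is positive, decreasing to $0$, and satisfies $\tilde\Delta_k-\tilde\Delta_{k+1}\ge\frac{t_k}{2}\norm{\nabla f(x^k)}^2$. Concavity of $\varphi$ combined with a weighted AM--GM step gives $t_k\norm{\nabla f(x^k)}\le\frac{\lambda}{2}\brac{\varphi(\tilde\Delta_k)-\varphi(\tilde\Delta_{k+1})}+\frac1\lambda t_k[\varphi'(\tilde\Delta_k)]^{-1}$. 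The obstacle is that the KL inequality \eqref{KL 2} controls $\varphi'(f(x^k)-f(\bar x))$, not $\varphi'(\tilde\Delta_k)$; here Assumption~\ref{assu desi} is decisive, yielding $[\varphi'(\tilde\Delta_k)]^{-1}\le\frac1C\brac{[\varphi'(f(x^k)-f(\bar x))]^{-1}+[\varphi'(S_k)]^{-1}}\le\frac1C\brac{\norm{\nabla f(x^k)}+[\varphi'(S_k)]^{-1}}$, the last step using \eqref{KL 2}. Choosing $\lambda>1/C$ lets me absorb the resulting $\frac1{\lambda C}t_k\norm{\nabla f(x^k)}$ into the left-hand side. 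Because $\varepsilon_i<2$ for large $i$ (this is exactly where $\limsup\varepsilon_k<2$ is used), one has $S_k=\frac12\sum_{i\ge k}t_i\varepsilon_i^2\le\sum_{i\ge k}t_i\varepsilon_i$, so monotonicity of $\varphi'$ reduces $\sum_k t_k[\varphi'(S_k)]^{-1}$ to the summable quantity in \eqref{desing condi}. Summing then gives $\sum_k t_k\norm{\nabla f(x^k)}<\infty$, and with $\sum_k t_k\varepsilon_k<\infty$ this produces the finite-length bound $\sum_k\norm{x^{k+1}-x^k}\le\sum_k t_k\brac{\norm{\nabla f(x^k)}+\varepsilon_k}<\infty$, so $\set{x^k}$ is Cauchy and converges to its only accumulation-point candidate $\bar x$.

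The point I expect to require the most care is the standard KL bootstrapping: inequality \eqref{KL 2} holds only for $x^k$ in a neighborhood $U$ of $\bar x$, whereas a priori the iterates merely visit $U$ infinitely often. I would fix $r>0$ with $B(\bar x,r)\subseteq U$, choose an index $K$ (possible since $\bar x$ is an accumulation point, $f(x^k)\to f(\bar x)$, and the tails of $\varphi(\tilde\Delta_\cdot)$ and of \eqref{desing condi} vanish) for which $\norm{x^K-\bar x}$ plus the telescoped length bound from $K$ onward is below $r$, and then induct to show the entire tail remains in $B(\bar x,r)$, making the length estimate valid uniformly. Finally, the closing dichotomy is immediate: if $\bar x$ is a global minimizer then $f(x^k)\ge f(\bar x)$ for all $k$, so either equality holds at some index or $f(x^k)>f(\bar x)$ for all large $k$, in which case the argument above applies and $x^k\to\bar x$.
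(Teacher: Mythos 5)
Your proposal is correct and reproduces the paper's overall architecture for all three parts (an approximate descent inequality, a tail-sum surrogate making $f(x^k)$ plus a vanishing correction monotone, and a KL finite-length argument in which Assumption~\ref{assu desi} splits $[\varphi'(f(x^k)-f(\bar x)+S_k)]^{-1}$ so that the KL inequality and \eqref{desing condi} can each handle one piece, followed by the induction keeping the tail in $\B(\bar x,R)$). Two technical ingredients differ from the paper, both in your favor or at worst neutral. First, you derive the descent inequality by polarization, $\dotproduct{\nabla f(x^k),g^k}=\tfrac12(\norm{\nabla f(x^k)}^2+\norm{g^k}^2-\norm{g^k-\nabla f(x^k)}^2)$, which yields the error term $\tfrac{t_k}{2}\varepsilon_k^2$ and a gradient coefficient $\tfrac{t_k}{2}$ that is positive with no constraint on $\varepsilon_k$; the paper instead uses Cauchy--Schwarz plus $\varepsilon_k\norm{\nabla f(x^k)}\le\tfrac12\varepsilon_k(1+\norm{\nabla f(x^k)}^2)$, which produces the coefficient $\tfrac{t_k}{2}(2-Lt_k-\varepsilon_k+Lt_k\varepsilon_k)$ and is precisely where $\limsup\varepsilon_k<2$ becomes essential. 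In your route that hypothesis is only used to bound $\varepsilon_k$ (so that $\sum t_k\varepsilon_k^2\le\sup_k\varepsilon_k\sum t_k\varepsilon_k<\infty$) and, in part (iii), to get $S_k=\tfrac12\sum_{i\ge k}t_i\varepsilon_i^2\le\sum_{i\ge k}t_i\varepsilon_i$ so that \eqref{desing condi} applies after using the monotonicity of $\varphi'$; this is a genuine, if mild, simplification. Second, to upgrade $\liminf\norm{\nabla f(x^k)}=0$ to full convergence you argue directly by the crossing-interval contradiction, whereas the paper packages the same idea into its "three sequences lemma" (Lemma~\ref{three sequences lemma}) applied with $\alpha_k=\norm{\nabla f(x^k)}$, $\beta_k=Lt_k$, $\gamma_k=Lt_k\varepsilon_k$; your version should make explicit that the climbing intervals can be taken disjoint and that $t_{k'}\downarrow 0$ ensures each interval's interior still carries a uniformly positive share of $\sum t_i\norm{\nabla f(x^i)}^2$, but these are routine. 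Everything else, including the absorption of $\tfrac12 t_k\norm{\nabla f(x^k)}$ after the AM--GM step and the closing dichotomy for a global minimizer, matches the paper's proof.
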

The proof of the theorem is presented in Appendix~\ref{appendix convergence IGD}. The demonstration that condition \eqref{desing condi} is satisfied when $\varphi(t) = Mt^{1-q}$ with some $M > 0$ and $q \in (0,1)$, and when $t_k = \frac{1}{k}$ and $\varepsilon_k = \frac{1}{k^p}$ with $p \ge 0$ for all $k \in \mathbb{N}$, is provided in Remark~\ref{rmk desing condi}.

The next example discusses the necessity of the last two conditions in \eqref{parameter general} in the convergence analysis of IGD while demonstrating that employing a constant error leads to the convergence to a nonstationary point of the method.

\begin{Example}[IGD with constant error converges to a nonstationary point]\rm \label{exam IGD not conve} Let $f:\R\rightarrow\R$ be defined by $f(x)=x^2$ for $x\in\R$. Given a perturbation radius $\rho>0$ and an initial point $x^1>\rho$, consider the iterative sequence
\begin{align}\label{IGD donot}
x^{k+1}=x^k-2t_k\brac{x^k-\rho \frac{x^k}{|x^k|}}\;\text{ for  }k\in\N,
\end{align}
where $\set{t_k}\subset[0,1/2],t_k\downarrow0,$ and $\sum_{k=1}^\infty t_k=\infty$. This algorithm is in fact the IGD applied to $f$ with $g^k= \nabla f\brac{x^k-\rho\frac{f'(x^k)}{\abs{f'(x^k)}}}$. Then  $\set{x^k}$ converges to $\rho,$ which is not a stationary point of $f.$ 
\end{Example}
The details of the example are presented in Appendix~\ref{appendix exam IGD not}. We now propose a general framework that encompasses SAM and all of its normalized variants including RSAM \citep{liu22}, VaSSO  \citep{li23vasso} and F-SAM \citep{li2024friendly}. Due to the page limit, we refer readers to Appendix~\ref{versions} for the detailed constructions of those methods. Remark~\ref{normalized variants remark} in Appendix~\ref{versions} also shows that all of these methods are special cases of Algorithm~\ref{general}, and thus all the convergence properties presented in Theorem~\ref{convergence IGD} follow. 
\begin{algorithm}[H]\caption{General framework for normalized variants of SAM}\label{general}
\setcounter{Step}{-1}
 \begin{Step}\rm
Choose  $x^1\in\R^n,$  $\set{\rho_k},\set{t_k}\subset (0,\infty)$, and $\set{d^k}\subset\R^n\setminus\set{0}.$ For $k\ge 1$ do the following:
\end{Step}
\begin{Step}\rm 
Set $x^{k+1}=x^k-t_k \nabla f\brac{x^k+\rho_k \frac{d^k}{\norm{d^k}}}$.
\end{Step}   

\end{algorithm}

\begin{corollary}\label{coro general} Let $f:\R^n\rightarrow\R$ be a $\C^{1,L}$ function, and let $\set{x^k}$ be generated by Algorithm~\ref{general} with the parameters
\begin{align}\label{parameter SAM + VASSO}
 \sum_{k=1}^\infty t_k=\infty,\;t_k\downarrow0, \sum_{k=1}^\infty t_k \rho_k<\infty,\;\limsup \rho_k<\frac{2}{L}.
\end{align}
Assume that $\inf_{k\in\N} f(x^k)>-\infty$. Then all convergence properties presented in Theorem~\ref{convergence IGD} hold.
\end{corollary}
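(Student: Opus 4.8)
The plan is to recognize that Algorithm~\ref{general} is nothing but a particular instance of the inexact gradient descent scheme in Algorithm~\ref{IGD}, and then to invoke Theorem~\ref{convergence IGD} directly. To this end, I would set the inexact gradient to be $g^k:=\nabla f\brac{x^k+\rho_k\frac{d^k}{\norm{d^k}}}$, so that the update $x^{k+1}=x^k-t_k g^k$ in Algorithm~\ref{general} coincides exactly with the IGD step $x^{k+1}=x^k-t_kg^k$. The only thing that then needs checking is that $g^k$ is a genuine absolute-error approximation of the true gradient $\nabla f(x^k)$, with an error sequence satisfying the hypotheses \eqref{parameter general} of Theorem~\ref{convergence IGD}.

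First I would bound the error. Since $d^k\in\R^n\setminus\set{0}$, the normalized direction $\frac{d^k}{\norm{d^k}}$ has unit norm, so the perturbed point lies at distance exactly $\rho_k$ from $x^k$. Using the Lipschitz continuity \eqref{Lips def} of $\nabla f$ with constant $L$, this gives
\begin{align*}
\norm{g^k-\nabla f(x^k)}
=\norm{\nabla f\brac{x^k+\rho_k\tfrac{d^k}{\norm{d^k}}}-\nabla f(x^k)}
\le L\rho_k\norm{\tfrac{d^k}{\norm{d^k}}}=L\rho_k.
\end{align*}
Hence Algorithm~\ref{general} is exactly the IGD method run with the absolute-error sequence $\varepsilon_k:=L\rho_k>0$.

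It then remains to verify that the parameter conditions \eqref{parameter SAM + VASSO} translate into the IGD conditions \eqref{parameter general} under this identification. The requirements $\sum_k t_k=\infty$ and $t_k\downarrow0$ are common to both. For the error summability, $\sum_k t_k\varepsilon_k=L\sum_k t_k\rho_k<\infty$ follows from $\sum_k t_k\rho_k<\infty$, while $\limsup_k\varepsilon_k=L\limsup_k\rho_k<2$ is precisely the requirement $\limsup_k\rho_k<\frac{2}{L}$. Together with the standing assumption $\inf_k f(x^k)>-\infty$, all hypotheses of Theorem~\ref{convergence IGD} are met, so conclusions (i)--(iii) transfer verbatim; for part (iii) the additional desingularizing condition \eqref{desing condi} and the KL assumptions are simply inherited with $\varepsilon_k=L\rho_k$.

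I expect no substantive obstacle in this corollary: the entire analytic difficulty---in particular the handling of the possibly nonmonotone function-value sequence---was already absorbed into Theorem~\ref{convergence IGD}. The only points demanding a little care are that the normalization forces the perturbation length to equal $\rho_k$, so the error is controlled by $L\rho_k$ rather than by $\norm{d^k}$, and that the strict inequality $\limsup\rho_k<\frac{2}{L}$ is exactly what yields $\limsup\varepsilon_k<2$. The genuine value of the statement lies upstream, in having formulated the IGD framework of Theorem~\ref{convergence IGD} with \emph{absolute} rather than relative errors, which is precisely what allows the SAM-type perturbation to fit as a special case.
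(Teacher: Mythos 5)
Your proposal is correct and follows exactly the same route as the paper's own proof: identify $g^k=\nabla f\bigl(x^k+\rho_k\frac{d^k}{\norm{d^k}}\bigr)$, use Lipschitz continuity of $\nabla f$ to bound $\norm{g^k-\nabla f(x^k)}\le L\rho_k$, and check that \eqref{parameter SAM + VASSO} with $\varepsilon_k=L\rho_k$ yields \eqref{parameter general} so that Theorem~\ref{convergence IGD} applies. Your additional remarks on the translation $\limsup\rho_k<2/L\Leftrightarrow\limsup\varepsilon_k<2$ are accurate and consistent with the paper.
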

The proof of this result is presented in Appendix~\ref{proof coro general}.
\begin{remark}\rm \label{rmk no harm}
  Note that the conditions in \eqref{parameter SAM + VASSO} do not pose any obstacles to the implementation of a constant perturbation radius for SAM in practical circumstances. This is due to the fact that a possible selection of ${t_k}$ and ${\rho_k}$ satisfying \eqref{parameter SAM + VASSO} is $t_k = \frac{1}{k}$ and $\rho_k = \frac{C}{k^{0.001}}$ for all $k\in\mathbb{N}$ (almost constant), where $C>0$. Then the initial perturbation radius is $C$, while after $C$ million iterations, it remains greater than $0.99C$. This phenomenon is also confirmed by numerical experiments in Appendix~\ref{numerical support} on nonconvex functions. The numerical results show that SAM with almost constant radii $\rho_k = \frac{C}{k^{p}}$ has a similar convergence behavior to SAM with a constant radius $\rho=C$. As SAM with a constant perturbation radius has sufficient empirical evidence for its efficiency in \citet{foret21}, this also supports the practicality of our almost constant perturbation radii.
\end{remark}

\section{USAM and unnormalized variants} \label{sec uSAM}

In this section, we study the convergence of various versions of USAM from the perspective of the following Inexact Gradient Descent method with relative errors. 
\setcounter{mycount}{2}
\setcounter{algorithm}{-1}
 \begin{algorithm}[H]
\caption{IGDr} \label{IGDr}
\setcounter{Step}{-1}
 \begin{Step}\rm
Choose some $x^0\in\R^n,\nu\ge 0$, and  $\set{t_k}\subset[0,\infty).$ For $k=1,2,\ldots,$ do the following:
\end{Step}
\begin{Step}\rm 
Set $x^{k+1}=x^k-t_k g^k$, where $\norm{g^k-\nabla f(x^k)}\le \nu \norm{\nabla f(x^k)}$.
\end{Step}   
\end{algorithm}

This algorithm was initially introduced in \citet{kmt23.1} in a different form, considering a different selection of error. The form of IGDr closest to Algorithm~\ref{IGDr} was established in \citet{kmt23.2} and  then further studied in  \citet{kmt23.2,kmpt23.3,kmt23.4}. In this paper, we extend the analysis of the method to a general stepsize rule covering both constant and diminishing cases, which was not considered in \citet{kmt23.2}.

\begin{theorem}\label{theo IGDr}
Let $f:\R^n\rightarrow\R$ be a smooth function satisfying the descent condition for some constant $L>0,$ and let $\set{x^k}$ be the sequence generated by Algorithm~\ref{IGDr} with  the relative error $\nu \in [0,1)$, and the stepsizes satisfying 
\begin{align}\label{stepsize IGDr}
   \sum_{k=1}^\infty t_k=\infty\;\text{ and }\; t_k\in \sbrac{0,\frac{2-2\nu-\delta}{L(1+\nu)^2}}
\end{align}
 for sufficiently large $k\in \N$ and for some $\delta>0$. Then either $f(x^k)\rightarrow-\infty$, or we have the assertions:
   
{\bf(i)} Every accumulation point of $\set{x^k}$ is a stationary point of the cost function $f$.

{\bf(ii)} If the sequence $\set{x^k}$ has any accumulation point $\bar x$, then $f(x^k)\downarrow f(\bar x)$.

{\bf(iii)} If $f\in\C^{1,L}$, then $\nabla f(x^k)\rightarrow0.$
   
   {\bf(iv)} If $f$ satisfies the KL property at some accumulation point $\bar x$ of $f$, then $\set{x^k}\rightarrow\bar x$.

{\bf(v)} Assume in addition to (iv) that the stepsizes are bounded away from $0$, and the KL property in {\rm(iv)} holds with the desingularizing function $\varphi(t)=Mt^{1-q}$ with $M>0$ and $q\in (0,1)$. Then either $\set{x^k}$ stops finitely at a stationary point, or the following convergence rates are achieved:
 
{$\bullet$} If $q=1/2$, then $\set{x^k},\;\set{\nabla f(x^k)}$, $\set{f(x^k)}$ converge linearly as $k\to\infty$ to $\bar x$, $0$, and $f(\bar x)$.

{$\bullet$} If $q\in(1/2,1)$, then 
\begin{align*}
\norm{x^k-\bar x}=\mathcal{O}\brac{k^{-\frac{1-q}{2q-1}}}&,\;\norm{\nabla f(x^k)}=\mathcal{O}\brac{k^{-\frac{1-q}{2q-1}}},\;f(x^k)-f(\bar x)=\mathcal{O}\brac{k^{-\frac{2-2q}{2q-1}}}.
\end{align*}
\end{theorem}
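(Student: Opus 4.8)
The plan is to drive every assertion from a single \emph{fundamental descent inequality}. Writing $g^k=\nabla f(x^k)+e^k$ with $\norm{e^k}\le\nu\norm{\nabla f(x^k)}$, I first apply the $L$-descent condition \eqref{descent condition} at $x=x^k$, $y=x^{k+1}=x^k-t_kg^k$ and insert the two elementary bounds $\dotproduct{\nabla f(x^k),g^k}\ge(1-\nu)\norm{\nabla f(x^k)}^2$ and $\norm{g^k}^2\le(1+\nu)^2\norm{\nabla f(x^k)}^2$. This yields
\[
f(x^{k+1})\le f(x^k)-t_k\brac{(1-\nu)-\tfrac{L}{2}(1+\nu)^2 t_k}\norm{\nabla f(x^k)}^2 .
\]
Feeding in the stepsize bound $t_k\le\frac{2-2\nu-\delta}{L(1+\nu)^2}$ makes the bracket at least $\delta/2$, so for all large $k$
\[
f(x^{k+1})\le f(x^k)-\tfrac{\delta}{2}\,t_k\norm{\nabla f(x^k)}^2 .
\]
Hence $\set{f(x^k)}$ is eventually nonincreasing, which is exactly the stated dichotomy: either $f(x^k)\to-\infty$, or $f(x^k)$ converges to a finite limit, in which case telescoping produces the key summability estimate $\sum_k t_k\norm{\nabla f(x^k)}^2<\infty$.

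For (i) and (ii) I work in the convergent case. Combining $\sum t_k=\infty$ with the summability estimate already gives $\liminf_k\norm{\nabla f(x^k)}=0$, but to upgrade this to ``every accumulation point is stationary'' I will run a \emph{capture argument}. If $\bar x$ were an accumulation point with $\nabla f(\bar x)\ne0$, I pick a ball $B(\bar x,2r)$ on which $\norm{\nabla f}$ is bounded below by some $\epsilon>0$ and above by some $M$; using $\norm{x^{k+1}-x^k}\le(1+\nu)t_k\norm{\nabla f(x^k)}\le(1+\nu)M t_k$ to lower-bound the amount of $\sum t_i$ needed to cross the annulus, and the descent inequality to lower-bound the corresponding decrease of $f$, I obtain a fixed positive drop of $f$ over each crossing (the case where the iterates never leave the ball contradicts summability directly). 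This contradicts the convergence of $\set{f(x^k)}$ and proves (i); part (ii) then follows immediately, since continuity forces $f(x^{k_j})\to f(\bar x)$ along any subsequence $x^{k_j}\to\bar x$ while $f(x^k)$ converges, so the common limit is $f(\bar x)$ and $f(x^k)\downarrow f(\bar x)$.

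Part (iii) is where I invoke $f\in\C^{1,L}$. The genuine Lipschitz bound gives the one-step control $\abs{\norm{\nabla f(x^{k+1})}-\norm{\nabla f(x^k)}}\le L\norm{x^{k+1}-x^k}\le L(1+\nu)t_k\norm{\nabla f(x^k)}$. Setting $a_k=\norm{\nabla f(x^k)}$, I then appeal to the abstract lemma that $\sum t_k=\infty$, $\sum t_k a_k^2<\infty$, and $\abs{a_{k+1}-a_k}\le c\,t_k a_k$ together force $a_k\to0$; the proof is the standard up-crossing argument, where each crossing of a band $[\epsilon/2,\epsilon]$ costs a fixed amount of $\sum t_k a_k^2$, so only finitely many crossings can occur. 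For (iv) I switch on the KL inequality at $\bar x$. Setting $r_k=f(x^k)-f(\bar x)\downarrow0$, concavity of $\varphi$ and \eqref{KL 2} give $\varphi(r_k)-\varphi(r_{k+1})\ge\varphi'(r_k)\,\tfrac{\delta}{2}t_k\norm{\nabla f(x^k)}^2\ge\tfrac{\delta}{2}t_k\norm{\nabla f(x^k)}$, whence $\norm{x^{k+1}-x^k}\le(1+\nu)t_k\norm{\nabla f(x^k)}\le\tfrac{2(1+\nu)}{\delta}\brac{\varphi(r_k)-\varphi(r_{k+1})}$; telescoping yields finite path length $\sum_k\norm{x^{k+1}-x^k}<\infty$, so $\set{x^k}$ is Cauchy and converges to its accumulation point $\bar x$. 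Because here $\set{f(x^k)}$ is monotone, this is the classical Kurdyka--\L ojasiewicz descent argument applied directly, unlike the nonmonotone setting of Theorem~\ref{convergence IGD}, and must only be wrapped in the usual neighborhood-trapping step keeping the iterates where the KL inequality holds.

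Finally, for (v) I specialize to $\varphi(t)=Mt^{1-q}$, so \eqref{KL 2} reads $\norm{\nabla f(x^k)}\ge\frac{1}{M(1-q)}r_k^{q}$; feeding this into the descent inequality with $t_k\ge\underline{t}>0$ produces the scalar recursion $r_k-r_{k+1}\ge C r_k^{2q}$. The case $q=1/2$ gives $r_{k+1}\le(1-C)r_k$, i.e.\ linear decay of $r_k$, and the finite-length estimate $\norm{x^k-\bar x}\le\frac{2(1+\nu)M}{\delta}r_k^{1-q}$ together with $\norm{\nabla f(x^k)}^2\le\frac{2}{\delta\underline{t}}(r_k-r_{k+1})$ transfers the linear rate to $\set{x^k}$ and $\set{\nabla f(x^k)}$; the case $q\in(1/2,1)$ gives the standard $r_k=\mathcal{O}(k^{-1/(2q-1)})$ from the recursion, and substituting into the same two transfer estimates yields the stated polynomial rates. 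I expect the \textbf{main obstacle} to be the general, non-diminishing and non-summable stepsize rule: the earlier IGDr analysis of \citet{kmt23.2} and the monotone framework of \citet{kmt23.1} lean on $t_k\to0$, so I must check that the capture argument, the abstract $a_k\to0$ lemma, and the KL trapping all survive when $t_k$ is merely constrained to the prescribed interval, in particular that the crossing-time lower bounds and the per-crossing decreases stay uniform in $k$.
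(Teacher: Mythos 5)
Your proposal is correct, and its backbone — the single descent inequality $f(x^{k+1})\le f(x^k)-\tfrac{\delta}{2}t_k\norm{\nabla f(x^k)}^2$ obtained from the $L$-descent condition together with $\dotproduct{\nabla f(x^k),g^k}\ge(1-\nu)\norm{\nabla f(x^k)}^2$ and $\norm{g^k}\le(1+\nu)\norm{\nabla f(x^k)}$, followed by telescoping to get $\sum_k t_k\norm{\nabla f(x^k)}^2<\infty$ — is exactly the paper's estimate \eqref{descent esti 1}, reached by a slightly more direct computation. The parts where you genuinely diverge are (i) and (iv). For (i) the paper does not use a capture argument: it first shows $0$ is an accumulation point of $\set{g^k}$ and then invokes Lemma~\ref{stationary point lemma} (summability of $\sum\norm{x^{k+1}-x^k}\cdot\norm{g^k}$ yields a common subsequence along which $x^k\to\bar x$ and $g^k\to0$), which is shorter but relies on that cited lemma; your annulus-crossing argument is self-contained and equally valid under the given stepsize bounds, since the upper bound on $t_k$ keeps each step length controlled on the ball and each crossing forces a uniform drop of $f$. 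For (iv) you run the classical KL finite-length estimate inline, whereas the paper verifies the abstract conditions (H1)--(H2) and delegates to Proposition~\ref{general convergence under KL}; these are the same argument in different packaging, and your treatment of (iii) is precisely the paper's three-sequences lemma with $\gamma_k=0$. For (v) you route the rates through the residual recursion $r_k-r_{k+1}\ge Cr_k^{2q}$ and the tail bound $\norm{x^k-\bar x}\lesssim r_k^{1-q}$, while the paper cites Propositions~\ref{general rate} and \ref{convergence rate deduce}; both give the stated rates (yours in fact gives a marginally better exponent for $\norm{\nabla f(x^k)}$ when $q>1/2$). Your worry about non-diminishing stepsizes is unfounded: every argument you use needs only that $t_k$ is bounded above, which the interval constraint in \eqref{stepsize IGDr} supplies.
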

Although the ideas for proving this result is similar to the one given in \citet{kmt23.2}, we do provide the full proof in the Appendix~\ref{appendix theo IGDr} for the convenience of the readers. We now show that using this approach, we derive more complete convergence results for USAM in \citet{maksym22} and also the extragradient method by \citet{korpelevich76,lin20}.

 \begin{algorithm}[H]
\caption{\citep{maksym22} Unnormalized Sharpness-Aware Minimization (USAM)} \label{USAM}
\setcounter{Step}{-1}
 \begin{Step}\rm
Choose $x^0\in\R^n,$ $\set{\rho_k}\subset [0,\infty)$, and $\set{t_k}\subset[0,\infty).$ For $k=1,2,\ldots,$ do the following:
\end{Step}
\begin{Step}\rm 
Set $x^{k+1}=x^k-t_k \nabla f(x^k+\rho_k \nabla f(x^k))$.
\end{Step}   
\end{algorithm}

 \begin{algorithm}[H]
\caption{\citep{korpelevich76} Extragradient Method} \label{EG}
\setcounter{Step}{-1}
 \begin{Step}\rm
Choose $x^0\in\R^n,$ $\set{\rho_k}\subset [0,\infty)$, and $\set{t_k}\subset[0,\infty).$ For $k=1,2,\ldots,$ do the following:
\end{Step}
\begin{Step}\rm 
Set $x^{k+1}=x^k-t_k \nabla f(x^k-\rho_k \nabla f(x^k))$.
\end{Step}   
\end{algorithm}

We are ready now to derive convergence of the two algorithms above. The proof of the theorem is given in Appendix~\ref{proof USAM EG}
\begin{theorem}\label{theorem USAM EG}
Let $f:\R^n\rightarrow\R$ be a ${\cal C}^1$-smooth function satisfying the descent condition with some constant $L>0.$ Let $\set{x^k}$ be the sequence generated by either Algorithm~\ref{USAM}, or by Algorithm~\ref{EG} with $\rho_k\le \frac{\nu}{L}$ for some $\nu \in[0,1)$ and with the stepsize satisfying \eqref{stepsize IGDr}. Then all the convergence properties in Theorem~\ref{theo IGDr} hold. 
\end{theorem}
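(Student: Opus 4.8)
The plan is to reduce Theorem~\ref{theorem USAM EG} to the already-established Theorem~\ref{theo IGDr} by exhibiting both USAM (Algorithm~\ref{USAM}) and the extragradient method (Algorithm~\ref{EG}) as instances of IGDr (Algorithm~\ref{IGDr}) with a relative error that stays inside $[0,1)$. The crucial observation is that in both algorithms the update has the form $x^{k+1}=x^k-t_k g^k$ where $g^k=\nabla f(x^k\pm\rho_k\nabla f(x^k))$, so I should treat $g^k$ as an inexact gradient and simply bound the mismatch $\norm{g^k-\nabla f(x^k)}$. Since $f$ satisfies the descent condition with constant $L$, the gradient map $\nabla f$ is $L$-Lipschitz (this is exactly the content of \eqref{descent condition} being used via the standard equivalence, or it can be invoked directly where the descent condition is applied in the proof of Theorem~\ref{theo IGDr}), and therefore
\begin{align}\label{eq:usam-error-bound}
\norm{g^k-\nabla f(x^k)}=\norm{\nabla f(x^k\pm\rho_k\nabla f(x^k))-\nabla f(x^k)}\le L\rho_k\norm{\nabla f(x^k)}.
\end{align}

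With \eqref{eq:usam-error-bound} in hand, the hypothesis $\rho_k\le \frac{\nu}{L}$ gives $\norm{g^k-\nabla f(x^k)}\le \nu\norm{\nabla f(x^k)}$, which is precisely the relative-error condition in Step~1 of Algorithm~\ref{IGDr} with the same constant $\nu\in[0,1)$. First I would state this identification explicitly for USAM, noting that the sign of $\rho_k$ in the perturbation is irrelevant to the Lipschitz bound, so the identical argument applies verbatim to the extragradient method where the perturbation is $x^k-\rho_k\nabla f(x^k)$. Since the stepsizes in the statement are assumed to satisfy \eqref{stepsize IGDr}, all the structural hypotheses of Theorem~\ref{theo IGDr} are met, and every one of its conclusions (i)--(v) transfers immediately to $\set{x^k}$.

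A subtle point to handle carefully is the issue raised at the start of Theorem~\ref{theo IGDr}: the Lipschitz-type bound \eqref{eq:usam-error-bound} requires $\nabla f$ to be globally $L$-Lipschitz, whereas the hypothesis here is only the weaker descent condition. I would address this by checking that in the proof of Theorem~\ref{theo IGDr} the relative-error inequality is used only at the iterate points $x^k$ and their perturbations, so what is actually needed is the one-sided estimate $\norm{\nabla f(x^k\pm\rho_k\nabla f(x^k))-\nabla f(x^k)}\le L\rho_k\norm{\nabla f(x^k)}$ rather than full Lipschitz continuity; under the descent condition this estimate is the natural quantity that survives, and it is exactly what \eqref{stepsize IGDr} is calibrated against. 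I expect this reconciliation between the descent condition and the relative-error requirement to be the main obstacle, since the paper emphasizes that the descent condition is strictly weaker than Lipschitz continuity of the gradient; the resolution is that Theorem~\ref{theo IGDr} itself is stated under the descent condition, so it must already accommodate exactly the form of error bound that \eqref{eq:usam-error-bound} provides.

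Finally, for the convergence-rate parts (iii) and (v), I would note that (iii) additionally requires $f\in\C^{1,L}$, which is consistent since the rate statements in Theorem~\ref{theo IGDr} carry the same standing assumptions; no extra work beyond the identification is needed, because those conclusions are packaged inside Theorem~\ref{theo IGDr} and invoked wholesale. The overall structure of the write-up is therefore short: verify \eqref{eq:usam-error-bound}, convert $\rho_k\le\frac{\nu}{L}$ into the relative-error bound, confirm the stepsize hypothesis \eqref{stepsize IGDr} is inherited directly, and then cite Theorem~\ref{theo IGDr} to conclude all of (i)--(v) for both algorithms simultaneously.
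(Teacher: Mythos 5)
Your proposal takes essentially the same route as the paper: the paper's proof defines $g^k:=\nabla f(x^k+\rho_k\nabla f(x^k))$, bounds $\norm{g^k-\nabla f(x^k)}\le L\norm{\rho_k\nabla f(x^k)}\le\nu\norm{\nabla f(x^k)}$, identifies this with the relative-error condition of Algorithm~\ref{IGDr}, and invokes Theorem~\ref{theo IGDr}, treating Algorithm~\ref{EG} identically. The ``subtle point'' you flag about the descent condition not implying Lipschitz continuity of $\nabla f$ is a genuine observation, but the paper's own proof simply applies the Lipschitz estimate without comment, so your identification is exactly the intended argument and no further reconciliation is carried out in the paper either.
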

\section{Numerical Experiments}\label{sec numerical}
To validate the practical aspect of our theory, this section compares the performance of SAM employing constant and diminishing stepsizes in image classification tasks. All the experiments are conducted on a computer with NVIDIA RTX 3090 GPU. The three types of diminishing stepsizes considered in the numerical experiments are $\eta_1/n$ (Diminish 1), $\eta_1/n^{0.5001}$ (Diminish 2), and $\eta_1/m\log m$ (Diminish 3), where $\eta_1$ is the initial stepsize, $n$ represents the number of epochs performed, and $m = \lfloor n/5 \rfloor + 2$. The constant stepsize in SAM is selected through a grid search over {0.1, 0.01, 0.001} to ensure a fair comparison with the diminishing ones. The algorithms are tested on two widely used image datasets: CIFAR-10 \citep{krizhevsky2009learning} and CIFAR-100 \citep{krizhevsky2009learning}.

\textbf{CIFAR-10}. We train well-known deep neural networks including ResNet18 \citep{he2016deep}, ResNet34 \citep{he2016deep}, and WideResNet28-10 \citep{zagoruyko2016wide} on this dataset by using 10\% of the training set as a validation set. Basic transformations, including random crop, random horizontal flip, normalization, and cutout \citep{devries2017improved}, are employed for data augmentation. All the models are trained by using SAM with SGD Momentum as the base optimizer for $200$ epochs and a batch size of $128$. This base optimizer is also used in the original paper \citep{foret21} and in the recent works on SAM \citet{ahn23d,li23vasso}. Following the approach by \citet{foret21}, we set the initial stepsize to $0.1$, momentum to $0.9$, the $\ell_2$-regularization parameter to $0.001$, and the perturbation radius $\rho$ to $0.05$. Setting the perturbation radius to be a constant here does not go against our theory, since by Remark~\ref{rmk no harm}, SAM with a constant radius and our almost constant radius have the same numerical behavior. We also conducted the numerical experiment with an almost constant radius and got the same results. Therefore, for simplicity of presentation, a constant perturbation radius is chosen. The algorithm with the highest accuracy, corresponding to the best performance, is highlighted in bold. The results in Table \ref{table: experiment momentum} report the mean and 95\% confidence interval across the three independent runs. The training loss in several tests is presented in Figure \ref{fig: experiment momentum}.

\textbf{CIFAR-100}. The training configurations for this dataset are similar to CIFAR10. The accuracy results are presented in Table \ref{table: experiment momentum}, while the training loss results are illustrated in Figure \ref{fig: experiment momentum}.

\begin{figure}[H]
\centering
\includegraphics[width=.23\textwidth]{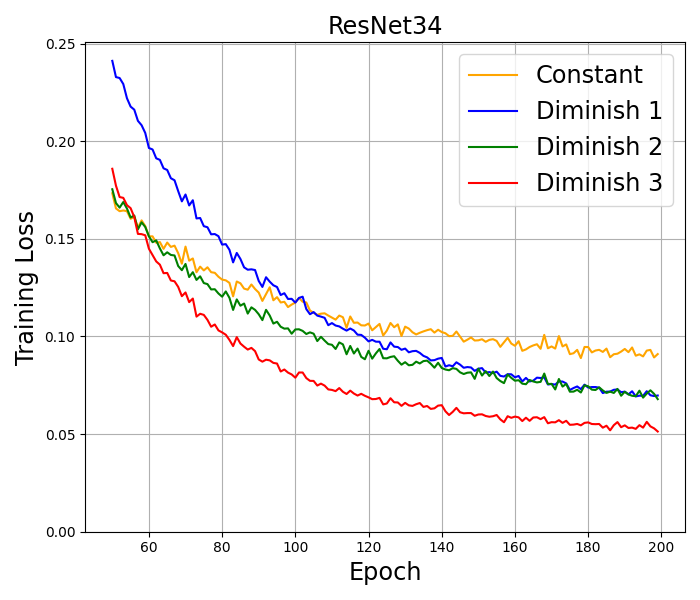}\quad
\includegraphics[width=.23\textwidth]{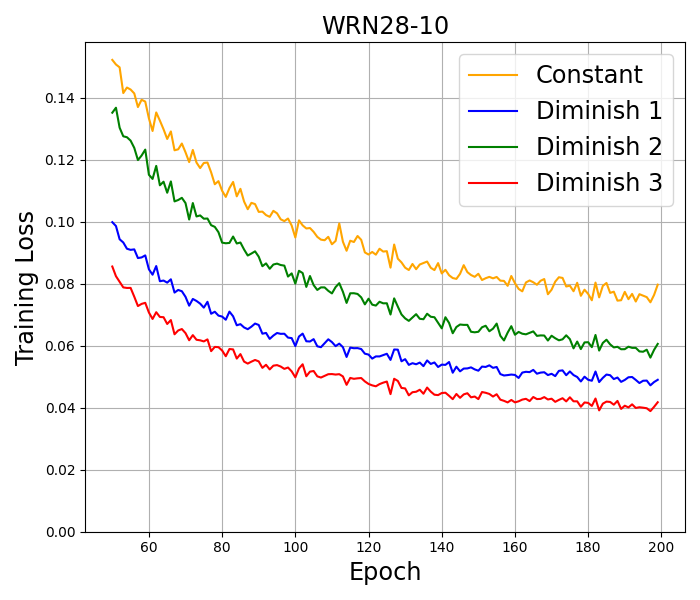}\quad
\includegraphics[width=.23\textwidth]{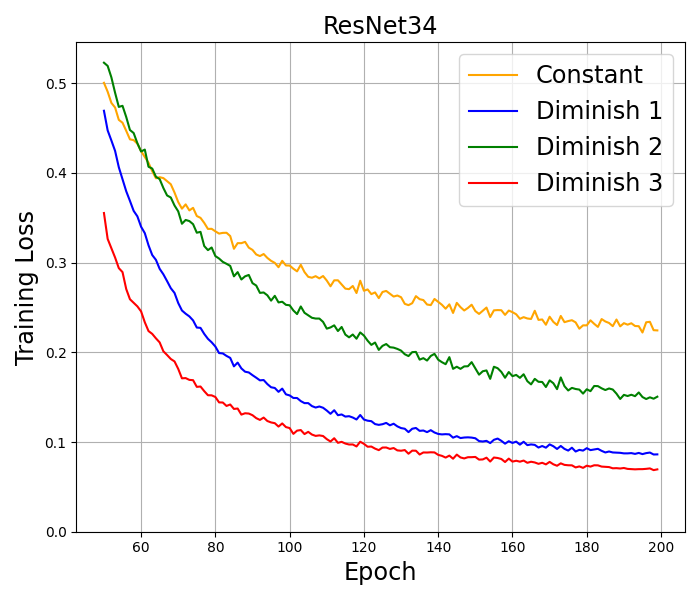}\quad
\includegraphics[width=.23\textwidth]{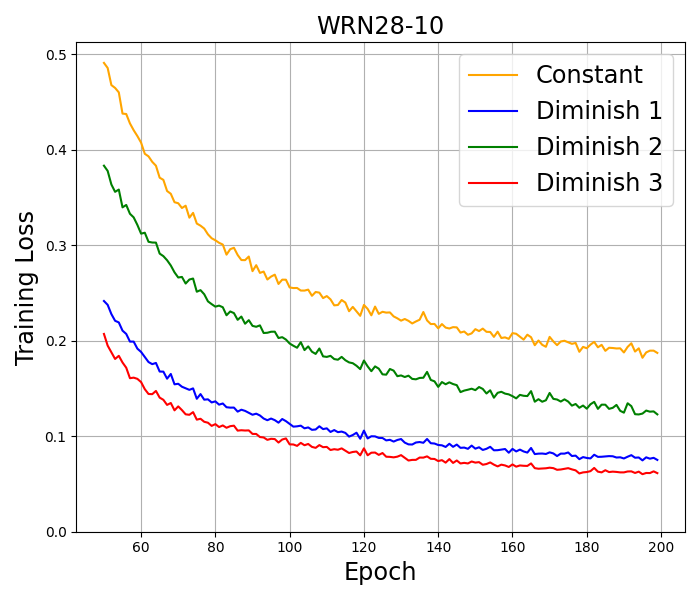}
 \caption{Training loss on CIFAR-10 (first two graphs) and CIFAR-100 (last two graphs)}
\label{fig: experiment momentum}
\end{figure}\vspace{-0.4cm}

\begin{table}[htbp]\small
\centering
\begin{tblr}{
  column{even} = {r},
  column{3} = {r},
  column{5} = {r},
  column{7} = {r},
  cell{1}{2} = {c=3}{c},
  cell{1}{5} = {c=3}{c},
  hline{1-3,7} = {-}{},
}
 & CIFAR-10& & & CIFAR-100   & & \\
Model & ResNet18& ResNet34& WRN28-10& ResNet18& ResNet34& WRN28-10\\
Constant & 94.10  	$\pm$ 0.27& 94.38  	$\pm$ 0.47& 95.33  	$\pm$ 0.23& 71.77  	$\pm$ 0.26& 72.49  	$\pm$ 0.23& 74.63  	$\pm$ 0.84\\
Diminish 1   & 93.95  	$\pm$ 0.34& 93.94  	$\pm$ 0.40& 95.18  	$\pm$ 0.03& 74.43  	$\pm$ 0.12& 73.99  	$\pm$ 0.70& 78.59  	$\pm$ 0.03\\
Diminish 2   & 94.60  	$\pm$ 0.09& \textbf{95.09  	$\pm$ 0.16} & 95.75  	$\pm$ 0.23& 73.40  	$\pm$ 0.24& 74.44  	$\pm$ 0.89& 77.04  	$\pm$ 0.23\\
Diminish 3   & \textbf{94.75  	$\pm$ 0.20} & 94.47  	$\pm$ 0.08& \textbf{95.88  	$\pm$ 0.10} & \textbf{75.65  	$\pm$ 0.44} & \textbf{74.92  	$\pm$ 0.76} & \textbf{79.70  	$\pm$ 0.12} 
\label{table: experiment momentum}
\end{tblr}\vspace{0.1cm}
\caption{Test accuracy on CIFAR-10 and CIFAR-100}
\end{table}\vspace{-0.4cm}
The results on CIFAR-10 and CIFAR-100 indicate that SAM with \textbf{Diminish 3} stepsize usually achieves the best performance in both accuracy and training loss among all tested stepsizes. In all the architectures used in the experiment, the results consistently show that diminishing stepsizes outperform constant stepsizes in terms of both accuracy and training loss measures. Additional numerical results on a larger data set and without momentum can be found in Appendix~\ref{nume addi}.
\vspace{-0.3cm}

\section{Discusison}\label{sec discuss}
\subsection{Conclusion}

In this paper, we provide a fundamental convergence analysis of SAM and its normalized variants together with a refined convergence analysis of USAM and its unnormalized variants. Our analysis is conducted in deterministic settings under standard assumptions that cover a broad range of applications of the methods in both convex and nonconvex optimization. The conducted analysis is universal and thus can be applied in different contexts other than SAM and its variants. The performed numerical experiments show that our analysis matches the efficient implementations of SAM and its variants that are used in practice.
\vspace{-0.3cm}

\subsection{Limitations}\vspace{-0.2cm}

 Our analysis is only conducted in deterministic settings, which leaves the stochastic and random reshuffling developments to our future research. The analysis of SAM coupling with momentum methods is also not considered in this paper. Another limitation pertains to numerical experiments, where only SAM was tested on three different architectures of deep learning.

\section*{Acknowledgment}

Pham Duy Khanh, Research of this author is funded by the Ministry of Education and Training Research Funding under the project B2024-SPS-07. Boris S. Mordukhovich, Research of this author was partly supported by the US National Science Foundation under grants DMS-1808978 and DMS-2204519, by the Australian Research Council under grant DP-190100555, and by Project 111 of China under grant D21024. Dat Ba Tran, Research of this author was partly supported by the US National Science Foundation under grants DMS-1808978 and DMS-2204519. 

The authors would like to thank Professor Mikhail V. Solodov for his fruitful discussions on the convergence of variants of SAM.

\bibliographystyle{plainnat} 
\bibliography{reference}

%%%%%%%%%%%%%%%%%%%%%%%%%%%%%%%%%%%%%%%%%%%%%%%%%%%%%%%%%%%%%%%%%%%%%%%%%%%%%%%
%%%%%%%%%%%%%%%%%%%%%%%%%%%%%%%%%%%%%%%%%%%%%%%%%%%%%%%%%%%%%%%%%%%%%%%%%%%%%%%
% APPENDIX
%%%%%%%%%%%%%%%%%%%%%%%%%%%%%%%%%%%%%%%%%%%%%%%%%%%%%%%%%%%%%%%%%%%%%%%%%%%%%%%
%%%%%%%%%%%%%%%%%%%%%%%%%%%%%%%%%%%%%%%%%%%%%%%%%%%%%%%%%%%%%%%%%%%%%%%%%%%%%%%
\newpage
\appendix
\tableofcontents
\onecolumn

%%%%%%%%%%%%%%%%%%%%%%%%%%%%%%%%%%%%%%%%%%%%%%%%%%%%%%%%%%%%%%%%%%%%%%%%%%%%%%%
%%%%%%%%%%%%%%%%%%%%%%%%%%%%%%%%%%%%%%%%%%%%%%%%%%%%%%%%%%%%%%%%%%%%%%%%%%%%%%%
\section{Counterexamples illustrating the Insufficiency of Fundamental Convergence Properties}
\subsection{Proof of Example~\ref{exam SAM constant not conver}}\label{appendix exam SAM constant not conver}
\begin{proof}
{Since $f(x)=\frac{1}{2}\dotproduct{Ax,x}-\dotproduct{b,x}$, the gradient of $f$ and the optimal solution are given by
\begin{align*}
\nabla f(x)=Ax-b\quad\text{ and }\quad x^*=A^{-1}b.
\end{align*}
Let $\lambda_{\min},\lambda_{\max}>0$ be the minimum, maximum eigenvalues of $A$, respectively and assume that
\begin{align}\label{select}
t\in \brac{\frac{1}{\lambda_{\min}}-\frac{1}{\lambda_{\max}+\lambda_{\min}},\frac{1}{\lambda_{\min}}},\;\rho>0,\;\text{ and }\;0<\norm{x^1-x^*}<\frac{t \rho \lambda_{\min}}{1-t\lambda_{\min}}.
\end{align}
The iterative procedure of \eqref{iterative SAM} can be written as
\begin{align}\label{update univariate SAM}
x^{k+1}=x^k-t\nabla f\brac{x^k+\rho\frac{\nabla f(x^k)}{\norm{\nabla f(x^k)}}} =x^k-t\sbrac{A\brac{x^k+\rho \frac{Ax^k-b}{\norm{Ax^k-b}}}-b}.
\end{align}
 Then $\set{x^k}$ satisfies the inequalities
\begin{align}\label{induction}
0<\norm{x^k-A^{-1}b}<\frac{t \rho \lambda_{\min}}{1-t\lambda_{\min}}\;\text{ for all }\;k\in\N.
\end{align}
 It is obvious that \eqref{induction} holds for $k=1.$ Assuming that this condition holds for any $k\in\N,$ let us show that it holds for $k+1.$ We deduce from the iterative update \eqref{update univariate SAM} that
\begin{align}\label{trho to 0}
\norm{x^{k+1}-A^{-1}b}&=\norm{x^k-t\sbrac{A\brac{x^k+\rho \frac{Ax^k-b}{\norm{Ax^k-b}}}-b}-A^{-1}b}\nonumber\\
&=\norm{(I-tA)(x^k-A^{-1}b)-t\rho \frac{A(Ax^k-b)}{\norm{Ax^k-b)}}}\nonumber\\
&\ge \norm{t\rho \frac{A(Ax^k-b)}{\norm{Ax^k-b}}}-\norm{(I-tA)(x^k-A^{-1}b)}\nonumber\\
&\ge t\rho \lambda_{\min} -(1-t\lambda_{\min})\norm{x^k-A^{-1}b}>0.
\end{align}
In addition, we get
\begin{align*}
\norm{x^{k+1}-A^{-1}b}&\le \norm{t\rho \frac{A(Ax^k-b)}{\norm{Ax^k-b}}}+\norm{(I-tA)(x^k-A^{-1}b)}\\
&\le t\rho \lambda_{\max}+(1-t\lambda_{\min})\norm{x^k-A^{-1}b}\\
&\le t\rho \lambda_{\max}+t\rho \lambda_{\min}< t\rho\frac{\lambda_{\min}}{1-t\lambda_{\min}}
\end{align*}
where the last inequality follows from $t >\frac{1}{\lambda_{\min}}-\frac{1}{\lambda_{\max}+\lambda_{\min}}$ from \eqref{select}. Thus, \eqref{induction} is verified. It follows from \eqref{trho to 0} that $x^k\not\rightarrow x^*.$}
\end{proof}

\subsection{Proof of Example~\ref{exam IGD not conve}}\label{appendix exam IGD not}
\begin{proof} Observe that $x^k>\rho$ for all $k\in\N.$ Indeed, this follows from $x^1>\rho$, $t_k<1/2$, and
\begin{align*}
x^{k+1}-\rho=x^k-\rho-2t_k(x^k-\rho)=(1-2t_k)(x^k-\rho).
\end{align*} 
In addition, we readily get
\begin{align}\label{sq the}
0\le x^{k+1}-\rho=(1-2t_k)(x^k-\rho)=\ldots=(x^1-\rho) \prod_{i=1}^k(1-2t_i) .
\end{align}
Furthermore, deduce from $\sum_{k=1}^\infty 2t_k=\infty$ that $\prod_{k=1}^\infty(1-2t_k)=0.$ Indeed, we have
\begin{align*}
0\le \prod_{k=1}^\infty(1-2t_k)\le \frac{1}{\prod_{k=1}^\infty(1+2t_k)}\le \frac{1}{1+\sum_{k=1}^\infty 2t_k}=0.
\end{align*}
This tells us by \eqref{sq the} and the classical squeeze theorem that $x^k\rightarrow \rho$ as $k\to\infty$.
\end{proof}

\section{Auxiliary Results for Convergence Analysis}

We first establish the new three sequences lemma, which is crucial in the analysis of both SAM, USAM, and their variants.
\begin{lemma}[three sequences lemma]\rm\label{three sequences lemma}
   Let $\{\alpha_k\},\{\beta_k\}, \{\gamma_k\}$ be sequences of nonnegative numbers satisfying the conditions
   \begin{align}
   &\alpha_{k+1}-\alpha_k\le  \beta_k\alpha_k+ \gamma_k\tag{a}\;\text{ for sufficient large }\;k\in\N,\label{a}\\
   &\set{\beta_k}\text{ is bounded},\sum_{k=1}^\infty \beta_k=\infty,\; \sum_{k=1}^\infty \gamma_k<\infty,\;\mbox{ and }\;\sum_{k=1}^\infty \beta_k\alpha_k^2<\infty.\tag{b}   \label{b}
   \end{align}
Then we have that $\alpha_k\rightarrow0$ as $k\to\infty$.
\end{lemma}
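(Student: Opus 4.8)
The plan is to prove the two one-sided statements $\liminf_{k\to\infty}\alpha_k=0$ and $\limsup_{k\to\infty}\alpha_k=0$ separately; since every $\alpha_k\ge0$, the two together give $\alpha_k\to0$. The first is a soft consequence of the summability hypotheses alone, while the second is the real content and is where the recursion \eqref{a} must be exploited.

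For the \emph{liminf}, I argue by contradiction: if $\liminf_{k\to\infty}\alpha_k=\ell>0$, there is an index $K$ with $\alpha_k\ge\ell/2$ for all $k\ge K$, so $\beta_k\alpha_k^2\ge(\ell^2/4)\beta_k$ and hence $\sum_{k\ge K}\beta_k\alpha_k^2\ge(\ell^2/4)\sum_{k\ge K}\beta_k=\infty$, contradicting \eqref{b}. Thus $\liminf_{k\to\infty}\alpha_k=0$.

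The difficulty, and the step I expect to be the main obstacle, is upgrading this to $\limsup_{k\to\infty}\alpha_k=0$. The naive route---bounding the total positive variation by $\sum_k(\alpha_{k+1}-\alpha_k)^+\le\sum_k(\beta_k\alpha_k+\gamma_k)$ and invoking the liminf---fails, because $\sum_k\beta_k\alpha_k$ need not converge under \eqref{b} (for instance $\beta_k=1/k$, $\alpha_k=1/\log k$ satisfies all hypotheses yet $\sum_k\beta_k\alpha_k=\infty$). Instead I would run an upcrossing argument. Suppose, for contradiction, that $\limsup_{k\to\infty}\alpha_k=L>0$, and set $M:=\sup_k\beta_k<\infty$. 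Fix a low level $a$ and a target level $b$ with $(1+M)a<b<L$ (e.g. $a=L/(4(1+M))$ and $b=L/2$, with any finite pair working if $L=\infty$). Since $\liminf_{k\to\infty}\alpha_k=0<a$ and $\limsup_{k\to\infty}\alpha_k>b$, standard upcrossing times produce infinitely many disjoint intervals $[p_i,q_i]$ with $\alpha_{p_i}\le a$, $\alpha_{q_i}\ge b$, and $\alpha_j>a$ for all $p_i<j\le q_i$.

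On each such interval the key is to separate the single entry step from the interior steps. By \eqref{a} and the boundedness of $\{\beta_k\}$, one step can grow $\alpha$ at most to $\alpha_{p_i+1}\le(1+\beta_{p_i})\alpha_{p_i}+\gamma_{p_i}\le(1+M)a+\gamma_{p_i}$, so it cannot by itself reach $b$ once $p_i$ is large enough that $\gamma_{p_i}<b-(1+M)a$; since $\gamma_k\to0$ (as $\sum_k\gamma_k<\infty$), only finitely many intervals are excluded. For the interior increments I telescope \eqref{a} from $p_i+1$ to $q_i$ and use $\alpha_j>a$ to write $\beta_j\alpha_j\le a^{-1}\beta_j\alpha_j^2$, obtaining
\[
0<b-(1+M)a-\gamma_{p_i}\le\alpha_{q_i}-\alpha_{p_i+1}\le a^{-1}\sum_{j=p_i+1}^{q_i-1}\beta_j\alpha_j^2+\sum_{j=p_i+1}^{q_i-1}\gamma_j.
\]
Each of the infinitely many admissible upcrossings thus contributes a fixed positive amount to the left-hand side, whereas the right-hand sides run over pairwise disjoint index blocks and therefore add up to at most $a^{-1}\sum_k\beta_k\alpha_k^2+\sum_k\gamma_k<\infty$ by \eqref{b}. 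This contradiction forces $\limsup_{k\to\infty}\alpha_k=0$, and combined with the liminf step we conclude $\alpha_k\to0$. The whole weight of the argument rests on this excursion estimate, where boundedness of $\beta_k$ and $\gamma_k\to0$ tame the entry step while the conversion $\beta_j\alpha_j\le a^{-1}\beta_j\alpha_j^2$ ties the interior back to the one genuinely summable quantity $\sum_k\beta_k\alpha_k^2$.
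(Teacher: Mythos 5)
Your proof is correct, and its engine coincides with the paper's: both arguments first get $\liminf_{k\to\infty}\alpha_k=0$ from $\sum_k\beta_k=\infty$ and $\sum_k\beta_k\alpha_k^2<\infty$, then isolate an excursion of $\set{\alpha_k}$ above a small positive level, telescope \eqref{a} over its interior while converting $\beta_j\alpha_j$ into $a^{-1}\beta_j\alpha_j^2$ there, and treat the single entry step separately, so that the finiteness of $\sum_k\beta_k\alpha_k^2$ and $\sum_k\gamma_k$ caps the growth. The packaging differs in two ways. First, the paper runs a direct $\varepsilon$-argument: it takes $k'$ to be the last index before $k$ with $\alpha_{k'}\le\varepsilon$ and shows $\alpha_k\le 2\varepsilon$ for all large $k$, whereas you argue by contradiction, extracting infinitely many disjoint upcrossings of a two-level band and observing that each contributes a fixed positive amount to a convergent series. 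Second, the entry step is neutralized differently: the paper bounds $\beta_{k'}\alpha_{k'}\le\sqrt{\bar\beta\,\beta_{k'}\alpha_{k'}^2}$ and uses $\beta_k\alpha_k^2\to0$, while you impose the gap $(1+M)a<b$ so that one step from level $a$ cannot reach level $b$ once $\gamma_{p_i}$ is small. Both devices are sound and neither buys anything substantive over the other; your explicit remark that the naive bound via $\sum_k(\beta_k\alpha_k+\gamma_k)$ can diverge (e.g.\ $\beta_k=1/k$, $\alpha_k=1/\log k$) correctly identifies why the excursion mechanism is indispensable.
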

\begin{proof}
First we show that $\liminf_{k\rightarrow\infty}\alpha_k=0.$ Supposing the contrary gives us some $\delta>0$ and $N\in \N$ such that $\alpha_k\ge \delta$ for all $k\ge N$. Combining this with the second and the third condition in \eqref{b} yields
\begin{align*}
\infty>\sum_{k=N}^\infty \beta_k\alpha_k^2\ge \delta^2 \sum_{k=N}^\infty \beta_k=\infty,
\end{align*}
which is a contradiction verifying the claim.
Let us now show that in fact $\lim_{k\rightarrow\infty} \alpha_k=0.$ Indeed, by the boundedness of $\set{\beta_k}$ define $\bar\beta:=\sup_{k\in\N}\beta_k$ and deduce from \eqref{a} that there exists $K\in\N$ such that 
\begin{align}\label{alpha con beta}
\alpha_{k+1}-\alpha_k\le \beta_k\alpha_k+ \gamma_k\text{ for all }k\ge K.
\end{align}
Pick $\varepsilon>0$ and find by $\liminf_{k\rightarrow\infty}\alpha_k=0$ and the two last conditions in \eqref{b} some $K_\varepsilon\in\N$ with $K_\varepsilon\ge K$, $\alpha_{K_\varepsilon}\le \varepsilon$,
\begin{align}\label{1 by 18}
\sum_{k=K_\varepsilon}^\infty \gamma_k<\frac{\varepsilon}{3}, \sum_{k=K_\varepsilon}^\infty \beta_k\alpha_k^2<\frac{\varepsilon^2 }{3},\;\text{ and }\; \bar\beta\beta_k\alpha_k^2\le \frac{\varepsilon^2}{9}\;\text{ as }\;k\ge K_\varepsilon.
\end{align}
It suffices to show that $\alpha_k\le 2\varepsilon$ for all $k\ge K_\varepsilon.$ Fix $k\ge K_\varepsilon$ and observe that for $\alpha_k\le \varepsilon$ the desired inequality is obviously satisfied. If $\alpha_k>\varepsilon,$ we use $\alpha_{K_\varepsilon}\le \varepsilon$ and find some $k'<k$ such that  $k'\ge K_\varepsilon$ and 
\begin{align*}
  \alpha_{k'}\le \varepsilon\;\text{ and }\;\alpha_i >\varepsilon\;\text{ for }\; i=k,k-1,\ldots,k'+1.
\end{align*}
Then we deduce from \eqref{alpha con beta} and \eqref{1 by 18} that
\begin{align*}
   \alpha_k- \alpha_{k'}&= \sum_{i=k'}^{k-1}\brac{\alpha_{i+1}-\alpha_i}\le \sum_{i=k'}^{k-1}\brac{ \beta_i\alpha_i+ \gamma_i}\\
   &=\sum_{i=k'+1}^{k} \beta_i\alpha_i+\sum_{i=k'}^{k-1} \gamma_i+\beta_{k'}\alpha_{k'} \\
&\le \frac{1}{\varepsilon}\sum_{i=k'+1}^{k}\beta_i \alpha_i^2+ \sum_{i=k'}^{k-1}\gamma_i+\sqrt{\beta_{k'}}\sqrt{\beta_{k'}}\alpha_{k'}\\
&\le \frac{1}{\varepsilon}\sum_{i=K_\varepsilon}^{\infty}\beta_i \alpha_i^2+ \sum_{i=K_\varepsilon}^{\infty}\gamma_i+\sqrt{\bar \beta \beta_{k'}\alpha_{k'}^2}\\
&\le \frac{1}{\varepsilon}\frac{\varepsilon^2}{3}+\frac{\varepsilon}{3}+\frac{\varepsilon}{3}=\varepsilon.
\end{align*}
As a consequence, we arrive at the estimate
\begin{align*}
\alpha_k= \alpha_{k'}+\alpha_k -\alpha_{k'}\le \varepsilon+\varepsilon=2\varepsilon\;\mbox{ for all }\;k\ge K_\ve, 
\end{align*}
which verifies that $\alpha_k\rightarrow 0$ as $k\to\infty$ sand thus completes the proof of the lemma.
\end{proof}

Next we recall some auxiliary results from \citet{kmt23.1}.

\begin{lemma}\label{stationary point lemma}
Let $\set{x^k}$ and $\set{d^k}$ be sequences in $\R^n$ satisfying the condition
\begin{align*}
\sum_{k=1}^\infty \norm{x^{k+1}-x^k}\cdot\norm{d^k}<\infty.
\end{align*}
If $\bar x$ is an accumulation point of the sequence $\set{x^k}$ and $0$ is an accumulation points of the sequence $\set{d^k}$, then there exists an infinite set $J\subset\N$ such that we have
\begin{align}\label{relation dk xk 3}
x^k\overset{J}{\rightarrow}\bar x\;\mbox{ and }\;d^k\overset{J}{\rightarrow}0.
\end{align}
\end{lemma}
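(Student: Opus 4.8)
The plan is to argue by contradiction, after first reducing the conclusion to a single quantitative ``simultaneous smallness'' statement and then exploiting the summability hypothesis through a travel-cost estimate.

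First I would observe that the existence of an infinite set $J$ with $x^k\overset{J}{\rightarrow}\bar x$ and $d^k\overset{J}{\rightarrow}0$ is equivalent to the property that \emph{for every $\epsilon>0$ there are infinitely many indices $k$ with} $\max\set{\norm{x^k-\bar x},\norm{d^k}}<\epsilon$. The forward direction is immediate from the definition of convergence along $J$, and the converse follows by a standard diagonal selection (take $\epsilon=1/m$ and pick an increasing sequence of qualifying indices). Hence it suffices to establish this property, and I would do so by contradiction: negating the quantifiers produces a single threshold $\epsilon>0$ and an index $N$ such that for all $k\ge N$ one has $\norm{x^k-\bar x}\ge\epsilon$ \emph{or} $\norm{d^k}\ge\epsilon$.

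Next I would read off two consequences of this dichotomy using the two accumulation-point hypotheses, both controlled by the \emph{same} $\epsilon$. Since $\bar x$ is an accumulation point of $\set{x^k}$, the iterate lies in $B(\bar x,\epsilon)$ infinitely often, and whenever $\norm{x^k-\bar x}<\epsilon$ the dichotomy forces $\norm{d^k}\ge\epsilon$. Since $0$ is an accumulation point of $\set{d^k}$, we have $\norm{d^k}<\epsilon$ infinitely often, and for those indices the dichotomy forces $x^k$ to lie at distance at least $\epsilon$ from $\bar x$. Thus the trajectory enters $B(\bar x,\epsilon)$ infinitely often yet is also expelled from it infinitely often, and throughout its stays inside the ball the factor $\norm{d^k}$ is bounded below by $\epsilon$.

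The heart of the argument is then to manufacture infinitely many disjoint ``excursions,'' each contributing a fixed positive amount to the weighted sum. Using that $\bar x$ is an accumulation point, I pick $k^*\ge N$ with $\norm{x^{k^*}-\bar x}<\epsilon/2$; because $x^k$ is forced outside $B(\bar x,\epsilon)$ infinitely often, the trajectory cannot stay in the ball forever, so there is a first later index $\ell$ with $\norm{x^\ell-\bar x}\ge\epsilon$. On the block $k^*\le k<\ell$ the iterate remains in $B(\bar x,\epsilon)$, hence $\norm{d^k}\ge\epsilon$, while the triangle inequality gives $\sum_{k=k^*}^{\ell-1}\norm{x^{k+1}-x^k}\ge\norm{x^\ell-x^{k^*}}\ge\epsilon/2$; multiplying shows this block contributes at least $\epsilon^2/2$ to $\sum_k\norm{x^{k+1}-x^k}\cdot\norm{d^k}$. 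Restarting the selection after $\ell$ yields infinitely many such blocks over disjoint index ranges, so the full sum diverges, contradicting the hypothesis. I expect the main obstacle to be the bookkeeping: carefully verifying the equivalence in the reduction step and confirming that the excursions can be chosen disjoint and infinite in number; the summability itself is then consumed in a single clean estimate, and the crucial point is that one threshold $\epsilon$ simultaneously controls the entry radius and the lower bound on $\norm{d^k}$, which is precisely what the combined $\max$-formulation of the negation provides.
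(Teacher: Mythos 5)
Your proof is correct. Note that the paper itself does not prove this lemma — it is recalled verbatim from the cited reference \citet{kmt23.1} — so there is no in-paper argument to compare against; judged on its own, your write-up is complete and matches the standard proof of this result. The reduction to the quantified statement ``for every $\varepsilon>0$ there are infinitely many $k$ with $\max\set{\norm{x^k-\bar x},\norm{d^k}}<\varepsilon$'' plus diagonal extraction is valid, the negation correctly yields a single threshold $\varepsilon$ and tail index $N$ with the dichotomy, and the excursion estimate is sound: each block from an entry point $k^*$ with $\norm{x^{k^*}-\bar x}<\varepsilon/2$ to the first exit $\ell$ from the $\varepsilon$-ball has $\norm{d^k}\ge\varepsilon$ throughout and total displacement at least $\varepsilon/2$ by the triangle inequality, so it contributes at least $\varepsilon^2/2$ to the weighted sum; since both the entries and the exits recur infinitely often, infinitely many disjoint blocks exist and the series diverges, contradicting the hypothesis.
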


\begin{proposition}\label{general convergence under KL}
Let $f:\R^n\rightarrow\R$ be a $\mathcal{C}^1$-smooth function, and let the sequence $\set{x^k}\subset\R^n$ satisfy the conditions:
\begin{itemize}
\item[\bf(H1)] {\rm(primary descent condition)}. There exists $\sigma>0$ such that for sufficiently large $k\in\N$ we have 
\begin{align*}
 f(x^k)-f(x^{k+1})\ge\sigma\norm{\nabla f(x^k)}\cdot\norm{x^{k+1}-x^k}.
\end{align*}
\item[\bf(H2)] {\rm(complementary descent condition)}. For sufficiently large $k\in\N$, we have
\begin{align*}
 \big[f(x^{k+1})=f(x^k)\big]\Longrightarrow [x^{k+1}=x^k].
\end{align*}
\end{itemize}
If $\bar x$ is an accumulation point of $\set{x^k}$  and $f$ satisfies the KL property at $\bar x$, then $x^k\rightarrow\bar x$ as $k\to\infty$.
\end{proposition}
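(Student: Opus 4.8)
The plan is to follow the classical Kurdyka-\L ojasiewicz descent scheme: combine the two descent conditions with the KL inequality to bound the total path length $\sum_k\norm{x^{k+1}-x^k}$, which forces $\set{x^k}$ to be Cauchy and hence convergent, with limit necessarily equal to the accumulation point $\bar x$. The whole argument rests on a per-step estimate that is then telescoped, after one delicate step to ensure the iterates never leave the KL neighborhood.

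First I would establish convergence of the function values. Condition \textbf{(H1)} gives $f(x^k)-f(x^{k+1})\ge 0$ for all large $k$, so $\set{f(x^k)}$ is eventually nonincreasing and has a limit. Since $\bar x$ is an accumulation point, continuity of $f$ yields a subsequence with $f(x^{k_j})\to f(\bar x)$, and monotonicity then forces $f(x^k)\to f(\bar x)$ with $f(x^k)\ge f(\bar x)$ for all large $k$. At this point I would split into two cases. If $f(x^k)=f(\bar x)$ for some large $k$, monotonicity gives $f(x^{j+1})=f(x^j)$ for all $j\ge k$, so \textbf{(H2)} forces $x^{j+1}=x^j$; the sequence is eventually constant, and since $\bar x$ is an accumulation point that constant value must be $\bar x$, so $x^k\to\bar x$. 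The substantive case is $f(x^k)>f(\bar x)$ for all large $k$.

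In that case I would invoke the KL property at $\bar x$, fixing $\eta>0$, a neighborhood $U$, and a concave desingularizing $\varphi$ as in Definition~\ref{KL ine}. Set $\Delta_k:=\varphi(f(x^k)-f(\bar x))$, which is well defined once $0<f(x^k)-f(\bar x)<\eta$ (guaranteed eventually since $f(x^k)\downarrow f(\bar x)$). The core estimate chains three facts for any index with $x^k\in U$: concavity of $\varphi$ gives $\Delta_k-\Delta_{k+1}\ge \varphi'(f(x^k)-f(\bar x))\brac{f(x^k)-f(x^{k+1})}$; the KL inequality \eqref{KL 2} gives $\varphi'(f(x^k)-f(\bar x))\ge \norm{\nabla f(x^k)}^{-1}$ (and in particular $\norm{\nabla f(x^k)}>0$, so the quotient is legitimate); and \textbf{(H1)} gives $f(x^k)-f(x^{k+1})\ge \sigma\norm{\nabla f(x^k)}\cdot\norm{x^{k+1}-x^k}$. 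Multiplying these cancels the gradient norm and yields the key telescoping bound
\begin{align*}
\sigma\norm{x^{k+1}-x^k}\le \Delta_k-\Delta_{k+1}.
\end{align*}

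The hard part is the \emph{staying-in-the-neighborhood} argument, needed so that $x^k\in U$ for all $k$ past some index and the telescoping is valid along the whole tail. I would pick $\rho>0$ with $B(\bar x,\rho)\subset U$, then use the accumulation-point property together with $f(x^k)\to f(\bar x)$ and the continuity of $\varphi$ at $0$ with $\varphi(0)=0$ to choose an index $N$ from the convergent subsequence satisfying $\norm{x^N-\bar x}+\frac{1}{\sigma}\Delta_N<\rho$ and $0<f(x^N)-f(\bar x)<\eta$. An induction on $k\ge N$ then shows $x^k\in B(\bar x,\rho)$: assuming $x^N,\dots,x^k\in B(\bar x,\rho)\subset U$, the per-step bound telescopes to $\sum_{j=N}^{k}\norm{x^{j+1}-x^j}\le \frac{1}{\sigma}(\Delta_N-\Delta_{k+1})\le \frac{1}{\sigma}\Delta_N$, whence $\norm{x^{k+1}-\bar x}\le \norm{x^N-\bar x}+\frac{1}{\sigma}\Delta_N<\rho$, closing the induction. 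Once the iterates are trapped in $B(\bar x,\rho)$, letting $k\to\infty$ gives $\sum_{k=N}^\infty\norm{x^{k+1}-x^k}<\infty$, so $\set{x^k}$ is Cauchy and converges; since $\bar x$ is an accumulation point, the limit is $\bar x$, which completes the proof.
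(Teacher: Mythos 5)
Your proposal is correct: the case split on whether $f(x^k)$ reaches $f(\bar x)$ (handled via \textbf{(H2)}), the per-step bound $\sigma\norm{x^{k+1}-x^k}\le\Delta_k-\Delta_{k+1}$ obtained by chaining concavity of $\varphi$, the KL inequality, and \textbf{(H1)}, and the induction trapping the iterates in $B(\bar x,\rho)$ to telescope the full tail are exactly the standard argument. The paper itself does not prove Proposition~\ref{general convergence under KL} but recalls it from \citet{kmt23.1}, and your reasoning matches that cited proof as well as the analogous trapping scheme the paper carries out in detail for Theorem~\ref{convergence IGD}(iii), so there is nothing to fix.
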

When the sequence under consideration is generated by a linesearch method and satisfies some conditions stronger than (H1) and (H2) in 
Proposition~\ref{general convergence under KL}, its convergence rates  are established in \citet[Proposition~2.4]{kmt23.1} under the KL property with $\psi(t)=Mt^{1-q}$ as given below.
\begin{proposition}\label{general rate}
Let $f:\R^n\rightarrow\R$ be a $\mathcal{C}^1$-smooth function, and let the sequences $\set{x^k}\subset\R^n,\set{\tau_k}\subset[0,\infty),\set{d^k}\subset\R^n$ satisfy the iterative condition $x^{k+1}=x^k+\tau_kd^k$ for all $k\in\N$. Assume that for all sufficiently large $k\in\N$ we have $x^{k+1}\ne x^k$ and the estimates
\begin{align}\label{two conditions}
f(x^k)-f(x^{k+1})\ge \beta \tau_k\norm{d^k}^2\;\text{ and }\;\norm{\nabla f(x^k)}\le \alpha\norm{d^k},
\end{align}
where $\alpha,\beta>0$. Suppose in addition that the sequence $\set{\tau_k}$ is bounded away from $0$ $($i.e., there is some $\bar \tau>0$ such that $\tau_k\ge \bar \tau$ for large $k\in\N)$, that $\bar x$ is an accumulation point of $\set{x^k}$, and that $f$ satisfies the KL property at $\bar x$ with $\psi(t)=Mt^{1-q}$ for some $M>0$ and $q\in(0,1)$. Then the following convergence rates are guaranteed:
\begin{itemize}\vspace*{-0.05in}
\item[\bf(i)] If $q\in(0,1/2]$, then the sequence $\set{x^k}$ converges linearly to $\bar x$.\vspace*{-0.05in}
\item[\bf(ii)]If $q\in(1/2,1)$, then we have the estimate
\begin{align*}
\norm{x^k-\bar x}=\mathcal{O}\brac{ k^{-\frac{1-q}{2q-1}}}.
\end{align*}
\end{itemize}
\end{proposition}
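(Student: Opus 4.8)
The plan is to combine the two estimates in \eqref{two conditions} with the KL inequality so as to first force convergence of the whole sequence and then extract a scalar recursion on the function-value gap that dictates both rates. First I would set $\phi_k:=f(x^k)-f(\bar x)$, which is nonnegative and, by the left estimate in \eqref{two conditions}, nonincreasing; since $\bar x$ is an accumulation point and $f$ is continuous, $\phi_k\downarrow0$. The hypothesis $x^{k+1}\ne x^k$ together with sufficient decrease makes the decrease strict, so $\phi_k>0$ for all large $k$ (otherwise the sequence terminates finitely, the excluded case). Before addressing rates I would verify that \eqref{two conditions} implies the descent conditions (H1) and (H2) of Proposition~\ref{general convergence under KL}: indeed $f(x^k)-f(x^{k+1})\ge\beta\tau_k\norm{d^k}^2=\beta\norm{d^k}\cdot\norm{x^{k+1}-x^k}\ge(\beta/\alpha)\norm{\nabla f(x^k)}\cdot\norm{x^{k+1}-x^k}$ gives (H1) with $\sigma=\beta/\alpha$, while strict decrease gives (H2). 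Hence Proposition~\ref{general convergence under KL} yields $x^k\to\bar x$, so for all large $k$ the iterate lies in the KL neighborhood $U$ with $0<\phi_k<\eta$, and Definition~\ref{KL ine} applies with $\psi(t)=Mt^{1-q}$, $\psi'(t)=M(1-q)t^{-q}$, giving $\phi_k^{q}\le M(1-q)\norm{\nabla f(x^k)}\le M(1-q)\alpha\norm{d^k}$.

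The two key inequalities to derive are a tail-length bound and a scalar recursion. For the tail-length bound I would use concavity of $\psi$ and the KL inequality to write $\psi(\phi_k)-\psi(\phi_{k+1})\ge\psi'(\phi_k)(\phi_k-\phi_{k+1})\ge\norm{\nabla f(x^k)}^{-1}\beta\tau_k\norm{d^k}^2\ge(\beta/\alpha)\tau_k\norm{d^k}=(\beta/\alpha)\norm{x^{k+1}-x^k}$; summing from $k$ onward and using $x^k\to\bar x$ then gives $\norm{x^k-\bar x}\le\sum_{j\ge k}\norm{x^{j+1}-x^j}\le(\alpha M/\beta)\phi_k^{1-q}$. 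This reduces the whole problem to estimating the decay of $\phi_k$. For that I would insert $\norm{d^k}\ge\phi_k^{q}/(M(1-q)\alpha)$ into the sufficient-decrease estimate and use $\tau_k\ge\bar\tau$ to obtain $\phi_k-\phi_{k+1}\ge\beta\bar\tau\norm{d^k}^2\ge C_0\phi_k^{2q}$ with $C_0:=\beta\bar\tau/(M(1-q)\alpha)^2$, i.e.\ the recursion $\phi_{k+1}\le\phi_k-C_0\phi_k^{2q}$.

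Finally I would analyze this recursion according to the exponent $2q$. When $q\in(0,1/2]$ I would write $\phi_{k+1}\le\phi_k\brac{1-C_0\phi_k^{2q-1}}$ and observe that the factor $C_0\phi_k^{2q-1}$ is bounded below by a positive constant for large $k$ (it equals $C_0$ when $q=1/2$, and tends to $+\infty$ when $q<1/2$ since $\phi_k\to0$), so $\phi_{k+1}\le\theta\phi_k$ for some $\theta\in(0,1)$ and hence $\phi_k=\mathcal{O}(\theta^k)$; the tail bound $\norm{x^k-\bar x}\le(\alpha M/\beta)\phi_k^{1-q}$ then gives linear convergence of $\set{x^k}$, which is assertion (i). When $q\in(1/2,1)$ the exponent satisfies $2q>1$, and the standard elementary lemma for recursions $u_{k+1}\le u_k-C_0u_k^{p}$ with $p>1$ gives $\phi_k=\mathcal{O}\brac{k^{-1/(2q-1)}}$; substituting into the tail bound yields $\norm{x^k-\bar x}\le(\alpha M/\beta)\phi_k^{1-q}=\mathcal{O}\brac{k^{-(1-q)/(2q-1)}}$, which is exactly (ii).

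I expect the main obstacle to be the recursion analysis in the polynomial regime $q\in(1/2,1)$: converting the one-step inequality $\phi_{k+1}\le\phi_k-C_0\phi_k^{2q}$ into the sharp rate $\mathcal{O}\brac{k^{-1/(2q-1)}}$ requires the careful comparison-with-an-ODE argument (integrating $u^{-2q}\,\d u$ and controlling the accumulated constant), and one must also confirm that the KL and neighborhood hypotheses remain active for all large $k$ once $x^k\to\bar x$, as well as track that $\theta\in(0,1)$ genuinely holds in the linear regime rather than degenerating into the finite-termination case.
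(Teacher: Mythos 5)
Your proposal is correct, and it is essentially the intended argument: the paper itself gives no proof of Proposition~\ref{general rate} but imports it from \citet[Proposition~2.4]{kmt23.1}, and your route --- verifying (H1) with $\sigma=\beta/\alpha$ and (H2) from \eqref{two conditions} to get $x^k\to\bar x$ via Proposition~\ref{general convergence under KL}, the concavity-plus-KL telescoping that yields the finite-length tail bound $\norm{x^k-\bar x}\le(\alpha M/\beta)\,\phi_k^{1-q}$, and the scalar gap recursion $\phi_{k+1}\le\phi_k-C_0\phi_k^{2q}$ whose two regimes $2q\le 1$ and $2q>1$ produce the linear and the $k^{-1/(2q-1)}$ rates --- is exactly the standard KL rate analysis underlying the cited result. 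One cleanup is worth making in the linear regime: instead of arguing that $C_0\phi_k^{2q-1}$ ``tends to $+\infty$'' when $q<1/2$ (note that this, combined with $\phi_{k+1}>0$, in fact shows the hypotheses cannot persist indefinitely in that subcase, so it is effectively vacuous once finite termination is excluded by $x^{k+1}\ne x^k$), simply set $c:=\min\{C_0,1/2\}$ and use that $\phi_k\le 1$ eventually, so $\phi_k^{2q}\ge\phi_k$ for $2q\le 1$ and hence $\phi_{k+1}\le(1-c)\phi_k$ with $1-c\in[1/2,1)$; this gives the geometric decay uniformly over $q\in(0,1/2]$ and disposes of your worry about $\theta$ degenerating, while at $q=1/2$ the positivity $\phi_{k+1}>0$ independently forces $C_0<1$. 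With that adjustment, the remaining ingredients you flag --- the standard comparison lemma for $u_{k+1}\le u_k-C_0u_k^{p}$, $p>1$, and the validity of the KL inequality for all large $k$ (guaranteed since $x^k\to\bar x$ places the tail in $U$ with $0<\phi_k<\eta$) --- are routine, and substituting the resulting decay of $\phi_k$ into the tail bound delivers both (i) (as R-linear convergence, ratio $\theta^{1-q}$) and (ii) exactly as you describe.
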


Yet another auxiliary result needed below is as follows.

\begin{proposition} \label{convergence rate deduce}
Let $f:\R^n\rightarrow\R$ be a ${\cal C}^1$-smooth function satisfying the descent condition \eqref{descent condition} with some constant $L>0$. Let $\set{x^k}$ be a sequence in $\R^n$ that converges to $\bar x$, and let $\alpha>0$ be such that 
\begin{align}\label{main condition rate}
\alpha\norm{\nabla f(x^k)}^2\le f(x^k)-f(x^{k+1})\;\text{ for sufficiently large }\;\in\N.
\end{align}
Consider the following convergence rates of $\set{x^k}:$
\begin{itemize}
\item[\bf(i)] $x^k\rightarrow\bar x$ linearly.
\item[\bf(ii)] $\norm{x^k-\bar x}=\mathcal{O}(m(k))$, where $m(k)\downarrow0$ as $k\rightarrow\infty.$
\end{itemize}
Then {\rm(i)} ensures the linear convergences of $f(x^k)$ to $f(\bar x)$, and $\nabla f(x^k)$ to $0$, while {\rm(ii)} yields $\abs{f(x^k)-f(\bar x)}=\mathcal{O}(m^2(k))$ and $\norm{\nabla f(x^k)}=\mathcal{O}(m(k))$ as $k\to\infty$. 
\end{proposition}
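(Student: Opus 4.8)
The plan is to reduce everything to two ingredients: the eventual monotonicity of $\set{f(x^k)}$ forced by the hypothesis \eqref{main condition rate}, and the quadratic upper bound on the value gap supplied by the descent condition \eqref{descent condition} evaluated at the limit $\bar x$. Once $\bar x$ is known to be stationary, both $f(x^k)-f(\bar x)$ and $\norm{\nabla f(x^k)}$ get sandwiched between $0$ and a constant multiple of $\norm{x^k-\bar x}^2$, respectively $\norm{x^k-\bar x}$, and the stated rates drop out immediately from the assumed rate on $\set{x^k}$.

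First I would record the preliminary facts. Since the left-hand side of \eqref{main condition rate} is nonnegative, that inequality gives $f(x^{k+1})\le f(x^k)$ for all large $k$, so $\set{f(x^k)}$ is eventually nonincreasing; by continuity of $f$ and $x^k\to\bar x$ it converges to $f(\bar x)$, whence $f(x^k)\ge f(\bar x)$ for all large $k$ and $f(x^k)-f(x^{k+1})\to 0$. Feeding the latter back into \eqref{main condition rate} yields $\nabla f(x^k)\to 0$, and continuity of $\nabla f$ gives $\nabla f(\bar x)=0$, i.e. $\bar x$ is stationary. This is the load-bearing step, because it is exactly what lets the descent condition act as a genuine quadratic majorization centered at $\bar x$.

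Next I would apply \eqref{descent condition} with $x=\bar x$ and $y=x^k$. As $\nabla f(\bar x)=0$, the linear term vanishes and I obtain $f(x^k)-f(\bar x)\le \frac{L}{2}\norm{x^k-\bar x}^2$. Combined with $f(x^k)\ge f(\bar x)$ from the first step, this sandwiches the value gap as $0\le f(x^k)-f(\bar x)\le \frac{L}{2}\norm{x^k-\bar x}^2$, settling the $f$-rate in both cases: a linear bound on $\norm{x^k-\bar x}$ produces linear convergence of the value gap (with the ratio squared), while $\norm{x^k-\bar x}=\mathcal{O}(m(k))$ produces $\abs{f(x^k)-f(\bar x)}=\mathcal{O}(m^2(k))$. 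For the gradient I would then chain the two bounds: using $f(x^{k+1})\ge f(\bar x)$, the hypothesis becomes $\alpha\norm{\nabla f(x^k)}^2\le f(x^k)-f(x^{k+1})\le f(x^k)-f(\bar x)\le \frac{L}{2}\norm{x^k-\bar x}^2$, so $\norm{\nabla f(x^k)}\le \sqrt{L/(2\alpha)}\,\norm{x^k-\bar x}$ for large $k$; thus $\nabla f(x^k)$ inherits the \emph{same} order as $\norm{x^k-\bar x}$, giving linear convergence in case (i) and $\mathcal{O}(m(k))$ in case (ii).

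I do not expect a serious obstacle here, as the argument is essentially a two-sided estimate. The only place demanding care is the opening step: one must verify that \eqref{main condition rate} forces eventual monotonicity and hence both $f(x^k)\ge f(\bar x)$ and the stationarity of $\bar x$. Without stationarity, the descent-condition majorization would retain a stray linear term $\dotproduct{\nabla f(\bar x),x^k-\bar x}$, which is only $\mathcal{O}(\norm{x^k-\bar x})$ and would destroy the quadratic rate claimed for the function values.
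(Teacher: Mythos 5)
Your proposal is correct and follows essentially the same route as the paper's proof: eventual monotonicity of $\set{f(x^k)}$ from \eqref{main condition rate} gives $f(x^k)\downarrow f(\bar x)$ and (via the squeeze argument) $\nabla f(\bar x)=0$, after which the descent condition at $\bar x$ yields $0\le f(x^k)-f(\bar x)\le \frac{L}{2}\norm{x^k-\bar x}^2$ and the gradient rate follows from $\alpha\norm{\nabla f(x^k)}^2\le f(x^k)-f(\bar x)$. No gaps; the argument matches the paper's step for step.
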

\begin{proof}
Condition \eqref{main condition rate} tells us that there exists some $N\in\N$ such that $f(x^{k+1})\le f(x^k)$ for all $k\ge \N.$ As $x^k\rightarrow\bar x$, we deduce that $f(x^k)\rightarrow f(\bar x)$ with $f(x^k)\ge f(\bar x)$ for $k\ge N.$ Letting $k\rightarrow\infty$ in \eqref{main condition rate} and using the squeeze theorem together with the convergence of $\set{x^k}$ to $\bar x$ and the continuity of $\nabla f$ lead us to $\nabla f(\bar x)=0.$ It follows from the descent condition of $f$ with constant $L>0$ and from \eqref{descent condition} that 
\begin{align*}
0\le f(x^k)-f(\bar x)\le \dotproduct{\nabla f(\bar x),x^k-\bar x}+\dfrac{{L}}{2}\norm{x^k-\bar x}^2=\dfrac{{L}}{2}\norm{x^k-\bar x}^2.
\end{align*}
This verifies the desired convergence rates of $\set{f(x^k)}.$
Employing finally \eqref{main condition rate} and $f(x^{k+1})\ge f(\bar x)$, we also get that 
\begin{align*}
   \alpha \norm{\nabla f(x^k)}^2\le f(x^k)-f(\bar x)\text{ for all }k\ge N.
\end{align*}
This immediately gives us the desired convergence rates for $\set{\nabla f(x^k)}$ and completes the proof.
\end{proof}

\section{Proof of Convergence Results}
\subsection{Proof of Theorem~\ref{theo SAM constant}}\label{appendix theo SAM constant}
\begin{proof}
To verify (i) first, for any $k\in\N$ define $g^k:=\nabla f\brac{x^k+\rho_k\frac{\nabla f(x^k)}{\norm{\nabla f(x^k)}}}$ and get
\begin{align}\label{Lrho}
\norm{g^k-\nabla f(x^k)}&=\norm{\nabla f\brac{x^k+\rho_k\frac{\nabla f(x^k)}{\norm{\nabla f(x^k)}}}-\nabla f(x^k)}\nonumber\\
&\le L \norm{x^k+\rho_k\frac{\nabla f(x^k)}{\norm{\nabla f(x^k)}}-x^k}=L\rho_k\le L\rho,
\end{align}
where $\rho:=\sup_{k\in\N}\rho_k.$ Using the monotonicity of $\nabla f$ due to the convexity of $f$ ensures that
\begin{align}\label{dot esti}
\dotproduct{g^k,\nabla f(x^k)}&=\dotproduct{\nabla f\brac{x^k+\rho_k\frac{\nabla f(x^k)}{\norm{\nabla f(x^k)}}}-\nabla f(x^k),\nabla f(x^k)}+\norm{\nabla f(x^k)}^2\nonumber\\
&=\frac{\norm{\nabla f(x^k)}}{\rho_k}\dotproduct{\nabla f\brac{x^k+\rho_k\frac{\nabla f(x^k)}{\norm{\nabla f(x^k)}}}-\nabla f(x^k),\rho_k\frac{\nabla f(x^k)}{\norm{\nabla f(x^k)}}}+\norm{\nabla f(x^k)}^2\ge \norm{\nabla f(x^k)}^2.
\end{align}
With the definition of $g^k,$ the iterative procedure \eqref{iterative SAM} can also be rewritten as $x^{k+1}=x^k-t_kg^k$ for all $k\in\N$. The first condition in \eqref{parameter constant rho} yields $t_k\downarrow 0,$ which gives us some $K\in\N$ such that $Lt_k<1$ for all $k\ge K$. Take some such $k$. Since $\nabla f$ is Lipschitz continuous with constant $L>0$, it follows from the descent condition in  \eqref{descent condition} and the estimates in \eqref{Lrho}, \eqref{dot esti} that
\begin{align}\label{first esti}
f(x^{k+1})&\le f(x^k)+\dotproduct{\nabla f(x^k),x^{k+1}-x^k}+\frac{L}{2}\norm{x^{k+1}-x^k}^2\nonumber\\
&=f(x^k)-t_k\dotproduct{\nabla f(x^k),g^k}+\frac{Lt_k^2}{2}\norm{g^k}^2\nonumber\\
&= f(x^k)-t_k(1-Lt_k) \dotproduct{\nabla f(x^k),g^k}+\frac{Lt_k^2}{2}\brac{\norm{g^k-\nabla f(x^k)}^2-\norm{\nabla f(x^k)}^2}\nonumber\\
&\le f(x^k) -t_k(1-Lt_k)\norm{\nabla f(x^k)}^2-\frac{Lt_k^2}{2}\norm{\nabla f(x^k)}^2+\frac{L^3t_k^2\rho^2}{2}\nonumber\\
&=f(x^k)-\frac{t_k}{2}\brac{2-Lt_k}\norm{\nabla f(x^k)}^2+\frac{L^3t_k^2\rho^2}{2}\nonumber\\
&\le f(x^k)-\frac{t_k}{2}\norm{\nabla f(x^k)}^2+\frac{L^3t_k^2\rho^2}{2}.
\end{align} 
Rearranging the terms above gives us the estimate
\begin{align}\label{estimate convex}
\frac{t_k}{2}\norm{\nabla f(x^k)}^2\le f(x^{k})-f(x^{k+1})+\frac{L^3t_k^2\rho^2}{2}.
\end{align}
Select any $M>K,$ define $S:=\frac{L^3\rho^2}{2}\sum_{k=1}^\infty t_k^2<\infty$, and get by taking into account $\inf_{k\in\N} f(x^k)>-\infty$ that
\begin{align*}
\frac{1}{2}\sum_{k=K}^M t_k\norm{\nabla f(x^k)}^2&\le \sum_{k=K}^M\brac{f(x^{k})-f(x^{k+1})}+\sum_{k=K}^M\frac{L^3t_k^2\rho^2}{2}\\
&\le f(x^K)-f(x^{M+1})+S\\
&\le f(x^K)-\inf_{k\in\N} f(x^k)+S.
\end{align*}
Letting $M\rightarrow\infty$ yields $\sum_{k=K}^\infty t_k\norm{\nabla f(x^k)}^2<\infty.$ Let  us now show that $\liminf\norm{\nabla f(x^k)}=0.$ Supposing the contrary gives us $\varepsilon>0$ and $N\ge K$ such that $\norm{\nabla f(x^k)}\ge \varepsilon$ for all $k\ge N$, which tells us that
\begin{align*}
\infty>\sum_{k=N}^\infty t_k\norm{\nabla f(x^k)}^2\ge \varepsilon^2 \sum_{k=N}^\infty t_k=\infty.
\end{align*}
This clearly contradicts the second condition in \eqref{parameter constant rho} and this justifies (i).
 
To verify (ii), define $ u_k:=\frac{L^3\rho^2}{2}\sum_{i=k}^\infty t_i^2\;\text{ for all }\;k\in\N$ and deduce from the first condition in \eqref{parameter constant rho} that $u_k\downarrow 0$ as $k\rightarrow\infty.$ With the usage of $\set{u_k},$ estimate \eqref{estimate convex} is written as
\begin{align*}
f(x^{k+1})+u_{k+1}\le f(x^k)+u_k-\frac{t_k}{2}\norm{\nabla f(x^k)}^2\; \text{ for all }\;k\ge K,
\end{align*}
which means that $\set{f(x^{k})+u_k}_{k\ge K}$ is nonincreasing. It follows from $\inf_{k\in\N} f(x^k)>-\infty$ and $u_k\downarrow0$ that $\set{f(x^k)+u_k}$ is convergent, which means that the sequence $\set{f(x^k)}$ is convergent as well. Assume now $f$ has some nonempty and bounded level set. Then every level set of $f$ is bounded by \citet[Exercise~2.12]{rusbook}. By \eqref{estimate convex}, we get that
\begin{align*}
f(x^{k+1})\le f(x^k)+\frac{L^3\rho^2}{2}t_k^2-\frac{t_k}{2}\norm{\nabla f(x^k)}^2\le f(x^k)+\frac{L^3\rho^2}{2}t_k^2\;\text{ for all }\;k\ge K.
\end{align*}
Proceeding by induction leads us to
\begin{align*}
f(x^{k+1})\le f(x^1)+\frac{L^3\rho^2}{2}\sum_{i=1}^k t_i^2\le f(x^1)+S\;\text{ for all }\;k\ge K,
\end{align*}
which means that $x^{k+1}\in \set{x\in \R^n\;|\;f(x)\le f(x^1)+S}$ for all $k\ge K$ and thus justifies the boundedness of $\set{x^k}$. 

Taking $\liminf \norm{\nabla f(x^k)}=0$ into account gives us an infinite set $J\subset \N$ such that $\norm{\nabla f(x^k)}\overset{J}{\rightarrow}0.$ As $\set{x^k}$ is bounded, the sequence $\set{x^{k}}_{k\in J}$ is also bounded, which gives us another infinite set $I\subset J$ and $\bar x\in\R^n$ such that $x^k\overset{I}{\rightarrow}\bar x.$ By 
\begin{align*}
\lim_{k\in I}\norm{\nabla f(x^k)}=\lim_{k\in J}\norm{\nabla f(x^k)}=0
\end{align*}
and the continuity of $\nabla f$, we get that $\nabla f(\bar x)=0$ ensuring that $\bar x$ is a global minimizer of $f$ with the optimal value $f^*:=f(\bar x).$ Since the sequence $\set{f(x^k)}$ is convergent and since $\bar x$ is an accumulation point of $\set{x^k}$, we conclude that $f^*=f(\bar x)$ is the limit of $\set{f(x^k)}$. Now take any accumulation point $\tilde x$ of $\set{x^k}$ and find an infinite set $J'\subset \N$ with $x^k\overset{J'}{\rightarrow}\tilde x.$ As $\set{f(x^k)}$ converges to $f^*$, we deduce that 
\begin{align*}
f(\tilde x)=\lim_{k\in J'}f(x^k)=\lim_{k\in \N}f(x^k)=f^*,
\end{align*}
which implies that $\tilde x$ is also a global minimizer of $f.$ Assuming in addition that $f$ has a unique minimizer $\bar{x}$ and taking any accumulation point $\tilde x$ of $\set{x^k},$ we get that $\tilde x$ is a minimizer of $f,$ i.e., $\tilde x=\bar x.$ This means that $\bar x$ is the unique accumulation point of $\set{x^k}$, and therefore $x^k\rightarrow\bar x$ as $k\to\infty$.
\end{proof}

\subsection{Proof of Theorem~\ref{convergence IGD}}\label{appendix convergence IGD}

\begin{proof} By \eqref{parameter general}, we find some $c_1>0,c_2\in(0,1)$, and $K\in\N$ such that 
\begin{align}\label{defi c1c2}
\frac{1}{2}( 2-Lt_k- \varepsilon_k+L t_k\varepsilon_k)\ge c_1,\quad \frac{1}{2}(1-Lt_k)+\frac{Lt_k\varepsilon_k}{2}\le  c_2,\;\mbox{ and}\quad Lt_k<1\;\text{ for all }\;k\ge K.
\end{align}
Let us first verify the estimate
\begin{align}\label{non descent}
f(x^{k+1})\le f(x^k) -c_1 t_k\norm{\nabla f(x^k)}^2+c_2t_k\varepsilon_k\;\text{ whenever }\;k\ge K.
\end{align}
To proceed, fix $k\in\N$ and deduce from the Cauchy-Schwarz inequality that
\begin{align}\label{esti product}
\dotproduct{g^k,\nabla f(x^k)}&=\dotproduct{g^k-\nabla f(x^k),\nabla f(x^k)}+\norm{\nabla f(x^k)}^2\nonumber\\
&\ge -\norm{g^k-\nabla f(x^k)}\cdot\norm{\nabla f(x^k)}+\norm{\nabla f(x^k)}^2\nonumber\\
&\ge -\varepsilon_k\norm{\nabla f(x^k)}+\norm{\nabla f(x^k)}^2.
\end{align}
Since $\nabla f$ is Lipschitz continuous with constant $L$, it follows from the descent condition in \eqref{descent condition} and the estimate \eqref{esti product} that
\begin{align*}
f(x^{k+1})&\le f(x^k)+\dotproduct{\nabla f(x^k),x^{k+1}-x^k}+\frac{L}{2}\norm{x^{k+1}-x^k}^2\nonumber\\
&=f(x^k)-t_k\dotproduct{\nabla f(x^k),g^k}+\frac{Lt_k^2}{2}\norm{g^k}^2\nonumber\\
&=f(x^k)-t_k(1-Lt_k)\dotproduct{\nabla f(x^k),g^k}+\frac{Lt_k^2}{2}(\norm{g^k-\nabla f(x^k)}^2-\norm{\nabla f(x^k)}^2)\nonumber\\
&\le f(x^k)-t_k(1-Lt_k)\brac{-\varepsilon_k\norm{\nabla f(x^k)}+\norm{\nabla f(x^k)}^2}+\frac{Lt_k^2\varepsilon_k^2}{2}-\frac{Lt_k^2}{2}\norm{\nabla f(x^k)}^2\nonumber\\
&=f(x^k)-\frac{t_k}{2}(2-{Lt_k})\norm{\nabla f(x^k)}^2+t_k(1-Lt_k)\varepsilon_k\norm{\nabla f(x^k)}+\frac{Lt_k^2\varepsilon_k^2}{2}\nonumber\\
&\le f(x^k)-\frac{t_k}{2}(2-{Lt_k})\norm{\nabla f(x^k)}^2+\frac{1}{2} t_k(1-Lt_k)\varepsilon_k\brac{1+\norm{\nabla f(x^k)}^2}+\frac{Lt_k^2\varepsilon_k^2}{2}\nonumber\\
&=f(x^k)-\frac{t_k}{2}(2-Lt_k-\varepsilon_k+L t_k\varepsilon_k)\norm{\nabla f(x^k)}^2+\frac{1}{2} t_k\varepsilon_k(1-Lt_k)+\frac{Lt_k^2\varepsilon_k^2}{2}\nonumber\\
&=f(x^k)-\frac{t_k}{2}(2-Lt_k-\varepsilon_k+L t_k\varepsilon_k)\norm{\nabla f(x^k)}^2+t_k\varepsilon_k\brac{\frac{1}{2} (1-Lt_k)+\frac{Lt_k\varepsilon_k}{2}.}\nonumber
\end{align*} 
Combining this with \eqref{defi c1c2} gives us \eqref{non descent}. Defining $u_k:=c_2\sum_{i=k}^\infty t_i\varepsilon_i$ for $k\in\N$, we get that $u_k\rightarrow 0$ as $k\rightarrow\infty$ and $u_k-u_{k+1}=t_k\varepsilon_k$ for all $k\in\N.$ Then \eqref{non descent} can be rewritten as
\begin{align}\label{descent f xk uk}
f(x^{k+1})+u_{k+1}\le f(x^k)+u_k-c_1t_k\norm{\nabla f(x^k)}^2,\quad k\ge K.
\end{align}
To proceed now with the proof of (i), we deduce from \eqref{descent f xk uk} combined with $\inf f(x^k)>-\infty$ and $u_k\rightarrow0$ as $k\rightarrow\infty$ that
\begin{align*}
c_1\sum_{k=K}^\infty t_k\norm{\nabla f(x^k)}^2&\le \sum_{k=K}^\infty (f(x^k)-f(x^{k+1})+u_k-u_{k+1} )\\
&\le f(x^K)-\inf_{k\in\N} f(x^k)+u_K<\infty.
\end{align*}
Next we employ Lemma~\ref{three sequences lemma} with $\alpha_k:=\norm{\nabla f(x^k)}$, $\beta_k:=Lt_k$, and $\gamma_k:=Lt_k\varepsilon_k$ for all $k\in\N$ to derive $\nabla f(x^k)\rightarrow0.$ Observe first that condition \eqref{a} is satisfied due to the the estimates
\begin{align*}
{\alpha_{k+1}-\alpha_k}&={\norm{\nabla f(x^{k+1})}-\norm{\nabla f(x^{k})}}\le \norm{\nabla f(x^{k+1})-\nabla f(x^k)}\\
&\le L\norm{x^{k+1}-x^k}=Lt_k\norm{g^k}\\
&\le Lt_k (\norm{\nabla f(x^k)}+\norm{g^k-\nabla f(x^k)})\\
&\le Lt_k (\norm{\nabla f(x^k)}+\varepsilon_k)\\
&=\beta_k \alpha_k+\gamma_k\;\text{ for all }\;k\in\N.
\end{align*}
Further, the conditions in \eqref{b} hold by \eqref{parameter general} and $\sum_{k=1}^\infty t_k\norm{\nabla f(x^k)}^2<\infty.$ As all the assumptions \eqref{a}, \eqref{b} are satisfied, Lemma~\ref{three sequences lemma} tells us that $\norm{\nabla f(x^k)}=\alpha_k\rightarrow0$ as $k\rightarrow\infty.$

To verify (ii), deduce from \eqref{descent f xk uk} that $\set{f(x^k)+u_k}$ is nonincreasing. As $\inf_{k\in\N} f(x^k)>-\infty$ and $u_k\rightarrow0,$ we get that $\set{f(x^k)+u_k}$ is bounded from below, and thus is convergent. Taking into account that $u_k\rightarrow0$, it follows that $f(x^k)$ is convergent as well. Since $\bar x$ is an accumulation point of $\set{x^k},$ the continuity of $f$ tells us that $f(\bar x)$ is also an accumulation point of $\set{f(x^k)},$ which immediately yields $f(x^k)\rightarrow f(\bar x)$ due to the convergence of $\set{f(x^k)}.$ 

It remains to verify (iii). By the KL property of $f$ at $\bar x,$ we find some $\eta>0,$ a neighborhood $U$ of $\bar x$, and a desingularizing concave continuous function $\varphi:[0,\eta)\rightarrow[0,\infty)$ such that $\varphi(0)=0$, $\varphi$ is $\mathcal{C}^1$-smooth on $(0,\eta)$, $\varphi'>0$ on $(0,\eta)$, and we have for all $x\in U$ with $0<f(x)-f(\bar x)<\eta$ that
\begin{align}
\varphi'(f(x)-f(\bar x))\norm{\nabla f(x)}\ge 1.
\end{align}
Let $\bar K>K$ be natural number such that $f(x^k)> f(\bar x)$ for all $k\ge \bar K$. Define $\Delta_k:=\varphi(f(x^k)-f(\bar x)+u_k)$ for all $k\ge \bar K$, and let $R>0$ be such that $\B(\bar x, R)\subset U$. Taking the number $C$ from  Assumption~\ref{assu desi}, remembering that $\bar x$ is an accumulation point of $\set{x^k},$ and using $f(x^k)+u_k\downarrow f(\bar x)$, $\Delta_k\downarrow0$  as $k\rightarrow\infty$ together with  condition \eqref{desing condi}, we get  
by choosing a larger $\bar K$ that $f(x^{\bar K})+u_{\bar K}<f(\bar x)+\eta$ and
\begin{align}\label{finding k1}\norm{x^{\bar K}-\bar x}+\frac{1}{{Cc_1}}\Delta_{\bar K}+\sum_{k={\bar K}}^\infty t_k\varphi'\brac{\sum_{i=k}^\infty t_i\varepsilon_i}^{-1}+\sum_{k=\bar K}^\infty t_k\varepsilon_k<R.
\end{align}
Let us now show by induction that $x^k\in \B(\bar x,R)$ for all $k\ge \bar K.$ The assertion obviously holds for $k=\bar K$ due to \eqref{finding k1}. Take some $\widehat{K}\ge\bar K$ and suppose that $x^k\in U$ for all $k=\bar K,\ldots,\widehat{K}.$ We intend to show that $x^{\widehat{K}+1}\in \B(\bar x,R)$ as well. To proceed, fix some $k\in \set{\bar K,\ldots,\widehat{K}}$ and get by $f(\bar x)<f(x^k)< f(x^k)+u_k< f(\bar x)+\eta$ that
\begin{align}\label{sup for eq4}
\varphi'(f(x^k)-f(\bar x))\norm{\nabla f(x^k)}\ge 1.
\end{align}
Combining this with $u_k>0$ and $f(x^k)-f(\bar x)>0$ gives us
\begin{subequations}
\begin{align}
\Delta_k-\Delta_{k+1}&\ge \varphi'(f(x^k)-f(\bar x)+u_k)(f(x^k)+u_k-f(x^{k+1})-u_{k+1})\label{eq1}\\
&\ge \varphi'(f(x^k)-f(\bar x)+u_k)c_1t_k\norm{\nabla f(x^k)}^2 \label{eq2}\\
&\ge \frac{C}{\brac{\varphi'(f(x^k)-f(\bar x))}^{-1}+\brac{\varphi'(u_k)}^{-1}} c_1t_k\norm{\nabla f(x^k)}^2\label{eq3}\\
&\ge \frac{C}{\norm{\nabla f(x^k)}+\brac{\varphi'(u_k)}^{-1}} c_1t_k\norm{\nabla f(x^k)}^2\label{eq4},
\end{align}
\end{subequations}
where \eqref{eq1} follows from the concavity of $\varphi$, \eqref{eq2} follows from \eqref{descent f xk uk}, \eqref{eq3} follows from Assumption~\ref{assu desi}, and \eqref{eq4} follows from \eqref{sup for eq4}. Taking the square root of both sides in \eqref{eq4} and employing the AM-GM inequality yield
\begin{align}\label{eq5}
  t_k\norm{\nabla f(x^k)}= \sqrt{t_k}\cdot \sqrt{t_k\norm{\nabla f(x^k)}^2}&\le \sqrt{\frac{1}{Cc_1}(\Delta_{k}-\Delta_{k+1})t_k(\norm{\nabla f(x^k)}+\brac{\varphi'(u_k)}^{-1}})\nonumber\\
&\le \frac{1}{2Cc_1}(\Delta_k-\Delta_{k+1})+\frac{1}{2}t_k\brac{\brac{\varphi'(u_k)}^{-1}+\norm{\nabla
 f(x^k)}}.
\end{align}
Using the nonincreasing property of $\varphi'$ due to the concavity of $\varphi$ and the choice of $c_2\in (0,1)$ ensures that 
\begin{align*}
\brac{\varphi'(u_k)}^{-1}=\brac{\varphi'(c_2\sum_{i=k}^\infty t_i\varepsilon_i)}^{-1}\le \brac{\varphi'(\sum_{i=k}^\infty t_i\varepsilon_i)}^{-1}.
\end{align*}
Rearranging terms and taking the sum over $k=\bar K, \ldots, \widehat K$ of inequality \eqref{eq5} gives us
\begin{align*}
\sum_{k={\bar K}}^{\widehat{K}} t_k\norm{\nabla f(x^k)}&\le \frac{1}{{Cc_1}}\sum_{k={\bar K}}^{\widehat K} (\Delta_{k}-\Delta_{k+1})+ \sum_{k={\bar K}}^{\widehat K} t_k\varphi'(u_{k})^{-1}\\
&= \frac{1}{{Cc_1}}(\Delta_{\bar K}-\Delta_{\widehat{K}})+\sum_{k={\bar K}}^{\widehat{K}} t_k\varphi'\brac{c_2\sum_{i=k}^\infty t_i\varepsilon_{i}}^{-1}\\
&\le \frac{1}{{Cc_1}} \Delta_{\bar K}+\sum_{k=\bar K}^{\widehat K} t_k\varphi'\brac{\sum_{i=k}^\infty t_i\varepsilon_{i}}^{-1}.
\end{align*}
The latter estimate together with the triangle inequality and \eqref{finding k1} tells us that
\begin{align*}
\norm{x^{\widehat K+1}-\bar x}&=\norm{x^{\bar K}-\bar x}+\sum_{k={\bar K}}^{\widehat K}\norm{x^{k+1}-x^k}\\
&=\norm{x^{\bar K}-\bar x}+\sum_{k={\bar K}}^{\widehat K}t_k\norm{g^k}\\
&\le \norm{x^{\bar K}-\bar x}+\sum_{k={\bar K}}^{\widehat K}t_k\norm{\nabla f(x^k)}+\sum_{k={\bar K}}^{\widehat K}t_k\norm{g^k-\nabla f(x^k)}\\
&\le \norm{x^{\bar K}-\bar x}+\sum_{k={\bar K}}^{\widehat K}t_k\norm{\nabla f(x^k)}+\sum_{k={\bar K}}^{\widehat K}t_k\varepsilon_{k}\\
&\le \norm{x^{\bar K}-\bar x}+\frac{1}{Cc_1} \Delta_{\bar K}+\sum_{k={\bar K}}^{\infty} t_k\varphi'\brac{\sum_{i=k}^\infty t_i\varepsilon_{i}}^{-1}+\sum_{k={\bar K}}^\infty t_k\varepsilon_{k}<R.
\end{align*}
By induction, this means that $x^k\in\B(\bar x,R)$ for all $k\ge {\bar K}.$ Then a similar device brings us to
\begin{align*}
\sum_{k=\bar K}^{\widehat K}t_k\norm{\nabla f(x^k)}\le \frac{1}{{Cc_1}} \Delta_{\bar K}+ \sum_{k={\bar K}}^{\infty} t_k\varphi'\brac{\sum_{i=k}^\infty t_i\varepsilon_{i}}^{-1}\text{ for all }\widehat K\ge \bar K,
\end{align*}
which yields $ \sum_{k=1}^\infty t_k\norm{\nabla f(x^k)}<\infty$. Therefore,
\begin{align*}
  \sum_{k=1}^\infty \norm{x^{k+1}-x^k}&= \sum_{k=1}^\infty t_k\norm{g^k}\le  \sum_{k=1}^\infty t_k\norm{\nabla f(x^k)}+ \sum_{k=1}^\infty t_k\norm{g^k-\nabla f(x^k)}\\
  &\le\sum_{k=1}^\infty t_k\norm{\nabla f(x^k)}+ \sum_{k=1}^\infty t_k\varepsilon_{k}<\infty
\end{align*}
which justifies the convergence of $\set{x^k}$ and thus completes the proof of the theorem.
\end{proof}

\subsection{Proof of Theorem~\ref{theo IGDr}}\label{appendix theo IGDr}

\begin{proof} Using $\norm{\nabla f(x^k)-g^k}\le \nu\norm{\nabla f(x^k)}$ gives us the estimates
\begin{align}
\norm{g^k}^2&=\norm{\nabla f(x^k)-g^k}^2 -\norm{\nabla f(x^k)}^2+2\dotproduct{\nabla f(x^k),g^k}\nonumber\\
&\le \nu^2 \norm{\nabla f(x^k)}^2-\norm{\nabla f(x^k)}^2+2\dotproduct{\nabla f(x^k),g^k}\nonumber\\
&= -(1-\nu^2)\norm{\nabla f(x^k)}^2+2\dotproduct{\nabla f(x^k),g^k},\label{es 1}\\
\dotproduct{\nabla f(x^k),g^k} &=\dotproduct{\nabla f(x^k),g^k-\nabla f(x^k)}+\norm{\nabla f(x^k)}^2\nonumber\\
&\le \norm{\nabla f(x^k)}\cdot\norm{g^k-\nabla f(x^k)}+\norm{\nabla f(x^k)}^2\nonumber\\
&\le(1+\nu)\norm{\nabla f(x^k)}^2,\label{es 2}\\
-\dotproduct{\nabla f(x^k),g^k} &=-\dotproduct{\nabla f(x^k),g^k-\nabla f(x^k)}-\norm{\nabla f(x^k)}^2\nonumber\\
&\le \norm{\nabla f(x^k)}\cdot\norm{g^k-\nabla f(x^k)}-\norm{\nabla f(x^k)}^2\nonumber\\
&\le -(1-\nu)\norm{\nabla f(x^k)}^2,\label{es 3}\\
 \norm{\nabla f(x^k)}-\norm{g^k-\nabla f(x^k)}   & \le \norm{g^k}\le \norm{\nabla f(x^k)}+\norm{g^k-\nabla f(x^k)},\nonumber
\end{align}
which in turn imply that
\begin{align}\label{two inequality}
(1-\nu )\norm{\nabla f(x^k)}\le \norm{g^k}\le(1+\nu) \norm{\nabla f(x^k)}\text{ for all }k\in\N.
\end{align}
Using condition \eqref{stepsize IGDr}, we find $N\in\N$ so that $2-2\nu -Lt_k(1+\nu)^2 \ge \delta$ for all $k\ge N.$ Select such a natural number $k$ and use the Lipschitz continuity of $\nabla f$ with constant $L$ to deduce from the descent condition (\ref{descent condition}), the relationship $x^{k+1}=x^k-t_kg^k$, and the estimates \eqref{es 1}--\eqref{es 3} that
\begin{align}
f(x^{k+1})&\le f(x^k)+\dotproduct{\nabla f(x^k),x^{k+1}-x^k}+\dfrac{L}{2} \norm{x^{k+1}-x^k}^2\nonumber\\
&= f(x^k)-t_k \dotproduct{\nabla f(x^k),g^k}+\dfrac{Lt_k^2}{2} \norm{g^k}^2\nonumber\\
&\le f(x^k)-t_k\dotproduct{\nabla f(x^k), g^k}+Lt_k^2\dotproduct{\nabla f(x^k), g^k}-\frac{Lt_k^2(1-\nu^2)}{2} \norm{\nabla f(x^k)}^2\nonumber\\
&\le f(x^k)-t_k(1-\nu)\norm{\nabla f(x^k)}^2+ Lt_k^2(1+\nu)\norm{\nabla f(x^k)}^2-\frac{Lt_k^2(1-\nu^2)}{2} \norm{\nabla f(x^k)}^2\nonumber\\
&=f(x^k)-\frac{t_k}{2}\brac{2-2\nu-Lt_k (1+\nu)^2} \norm{\nabla f(x^k)}^2\nonumber\\
&\le f(x^k)-\frac{\delta t_k}{2}\norm{\nabla f(x^k)}^2\label{descent esti 1}\text{ for all }k\ge N.
\end{align}
It follows from the above that the sequence $\set{f(x^k)}_{k\ge N}$ is nonincreasing, and hence the condition $\inf_{k\in\N} f(x^k)>-\infty$ ensures the convergence of $\set{f(x^k)}$. This allows us to deduce from \eqref{descent esti 1} that
\begin{align}\label{seri tk fxk con}
\frac{\delta}{2} \sum_{k=N}^\infty t_k\norm{\nabla f(x^k)}^2\le \sum_{K=N}^\infty (f(x^k)-f(x^{k+1}))\le f(x^K)-\inf_{k\in\N} f(x^k)<\infty.
\end{align}
Combining the latter with \eqref{two inequality} and $x^{k+1}=x^k-t_kg^k$ gives us 
\begin{align}\label{common lemma}
 \sum_{k=1}^\infty \norm{x^{k+1}-x^k}\cdot\norm{g^k}  =\sum_{k=1}^\infty t_k\norm{g^k}^2\le (1+\nu)^2\sum_{k=1}^\infty t_k\norm{\nabla f(x^k)}^2<\infty.
\end{align}
Now we are ready to verify all the assertions of the theorem. Let us start with (i) and show that $0$ in an accumulation point of $\set{g^k}$. Indeed, supposing the contrary gives us $\varepsilon>0$ and $K\in\N$ such that $\norm{g^k}\ge \varepsilon$ for all $k\ge K$, and therefore 
\begin{align*}
\infty>\sum_{k=K}^\infty t_k\norm{g^k}^2\ge \sum_{k=K}^\infty t_k=\infty,
\end{align*}
which is a contradiction justifying that $0$ is an accumulation point of $\set{g^k}$. If
$\bar x$ is an accumulation point of $\set{x^k},$ then by Lemma~\ref{stationary point lemma} and \eqref{common lemma}, we find an infinite set $J\subset N$ such that $x^k\overset{J}{\rightarrow}\bar x$ and $g^k\overset{J}{\rightarrow}0.$ The latter being combined with \eqref{two inequality} gives us $\nabla f(x^k)\overset{J}{\rightarrow}0$, which yields the stationary condition $\nabla f(\bar x)=0.$

To verity (ii), let $\bar x$ be an accumulation point of $\set{x^k}$ and find an infinite set $J\subset \N$ such that $x^k\overset{J}{\rightarrow}\bar x.$ Combining this with the continuity of $f$ and the fact that $\set{f(x^k)}$ is convergent, we arrive at the equalities
\begin{align*}
f(\bar x)= \lim_{k\in J}f(x^k)= \lim_{k\in \N}f(x^k),
\end{align*}
which therefore justify assertion (ii).

To proceed with the proof of the next assertion (iii), assume that $\nabla f$ is Lipschitz continuous with constant $L>0$ and employ Lemma~\ref{three sequences lemma} with $\alpha_k:=\norm{\nabla f(x^k)}$, $\beta_k:=Lt_k(1+\nu)$, and $\gamma_k:=0$ for all $k\in\N$ to derive that $\nabla f(x^k)\rightarrow0.$ Observe first that  condition \eqref{a} of this lemma is satisfied due to the the estimates
\begin{align*}
\alpha_{k+1}-\alpha_k&=\norm{\nabla f(x^{k+1})}-\norm{\nabla f(x^{k})}\\
&\le \norm{\nabla f(x^{k+1})-\nabla f(x^k)}\le L\norm{x^{k+1}-x^k}\\
&=Lt_k\norm{g^k}\le Lt_k (1+\nu)\norm{\nabla f(x^k)}=\beta_k\alpha_k.
\end{align*}
The conditions in \eqref{b} of the lemma are satisfied since $\set{t_k}$ is bounded, $\sum_{k=1}^\infty t_k =\infty$ by \eqref{stepsize IGDr}, $\gamma_k=0$, and
\begin{align*}
\sum_{k=1}^\infty \beta_k\alpha_k^2=L(1+\nu)\sum_{k=1}^\infty t_k\norm{\nabla f(x^k)}^2<\infty,
\end{align*}
where the inequality follows from \eqref{seri tk fxk con}. Thus applying Lemma~\ref{three sequences lemma} gives us $\nabla f(x^k)\rightarrow0$ as $k\to\infty$. 

To prove (iv), we verify the assumptions of Proposition~\ref{general convergence under KL} for the sequences generated by Algorithm~\ref{IGDr}. It follows from \eqref{descent esti 1} and $x^{k+1}=x^k-t_kg^k$ that 
\begin{align}
 f(x^{k+1})&\le f(x^k)-\frac{\delta t_k}{2(1+\nu)}\norm{\nabla f(x^k)}\cdot\norm{g^k}\nonumber\\
 &=f(x^k)-\frac{\delta }{2(1+\nu)}\norm{\nabla f(x^k)}\cdot\norm{x^{k+1}-x^k},
\end{align}
which justify (H1) with $\sigma=\frac{\delta}{2(1+\nu)}$. Regarding condition (H2), assume that $f(x^{k+1})=f(x^k)$ and get by \eqref{descent esti 1} that $\nabla f(x^k)=0$, which implies by $\norm{g^k-\nabla f(x^k)}\le \nu\norm{\nabla f(x^k)}$ that $g^k=0.$ Combining this with $x^{k+1}=x^k-t_kg^k$ gives us $x^{k+1}=x^k,$ which verifies (H2). Therefore, Proposition~\ref{general convergence under KL} tells us that $\set{x^k}$ is convergent.

Let us now verify the final assertion (v) of the theorem. It is nothing to prove if $\set{x^k}$ stops at a stationary point after a finite number of iterations. Thus we assume that $\nabla f(x^k)\ne 0$ for all $k\in\N.$ The assumptions in (v) give us $\bar t>0$ and $N\in\N$ such that $t_k\ge \bar t$ for all $k\ge N.$ Let us check that the assumptions of Proposition~\ref{general rate} hold for the sequences generated by Algorithm~\ref{IGDr} with $\tau_k:=t_k$ and $d^k:=-g^k$ for all $k\in\N$. The iterative procedure $x^{k+1}=x^k-t_kg^k$ can be rewritten as $x^{k+1}=x^k+t_kd^k$. Using the first condition in \eqref{two inequality} and taking into account that $\nabla f(x^k)\ne 0$ for all $k\in\N,$ we get that $g^k\ne 0$ for all $k\in\N.$ Combining this with $x^{k+1}=x^k-t_kg^k$ and $t_k\ge \bar t$ for all $k\ge N,$ tells us that $x^{k+1}\ne x^k$ for all $k\ge N.$ It follows from \eqref{descent esti 1} and \eqref{two inequality} that
\begin{align}
 f(x^{k+1})&\le f(x^k)-\frac{\delta t_k}{2(1+\nu)^2}\norm{g^k}^2\label{descent esti}.
\end{align}
This estimate together with the second inequality in \eqref{two inequality} verifies \eqref{two conditions} with $\beta=\frac{\delta}{2(1+\nu)^2},\alpha=\frac{1}{1-\nu}$. As all the assumptions are verified, Proposition~\ref{general rate} gives us the assertions: 
\begin{itemize}
\item If $q\in(0,1/2]$, then the sequence $\set{x^k}$ converges linearly to $\bar x$. 
\item If $q\in(1/2,1)$, then we have the estimate
\begin{align*}
\norm{x^k-\bar x}=\mathcal{O}\brac{ k^{-\frac{1-q}{2q-1}}}.
\end{align*}
\end{itemize}
The convergence rates of $\set{f(x^k)}$ and $\set{\norm{\nabla f(x^k)}}$ follow now from Proposition~\ref{convergence rate deduce}, and thus we are done.
\end{proof}

\subsection{Proof of Theorem~\ref{theorem USAM EG}} \label{proof USAM EG}
\begin{proof} Let $\set{x^k}$ be the sequence generated by Algorithm~\ref{USAM}. Defining $g^k:=\nabla f(x^k+\rho_k\nabla f(x^k))$ and utilizing  $\rho_k \le \frac{\nu}{L}$, we obtain
\begin{align*}
\norm{g^k-\nabla f(x^k)}&=\norm{\nabla f(x^k+\rho_k\nabla f(x^k))-\nabla f(x^k)}\\
&\le L\norm{\rho_k \nabla f(x^k)}\le \nu \norm{\nabla f(x^k)},
\end{align*}
which verifies the inexact condition in Step~2 of Algorithm~\ref{IGDr}. Therefore, all the convergence properties in Theorem~\ref{theo IGDr} hold for Algorithm~\ref{USAM}. The proof for the convergence properties of Algorithm~\ref{EG} can be conducted similarly.
\end{proof}
\subsection{Proof of Corollary~\ref{coro general}} \label{proof coro general}
\begin{proof}
Considering Algorithm~\ref{general} and defining $g^k = \nabla f\left(x^k + \rho_k \frac{d^k}{\norm{d^k}}\right)$, we deduce that
\begin{align*}
\norm{g^k-\nabla f(x^k)}\le L\norm{x^k+\rho_k\frac{d^k}{\norm{d^k}}-x^k}=L\rho_k.
\end{align*}
Therefore, Algorithm~\ref{general} is a specialization of Algorithm~\ref{IGD} with $\varepsilon_k = L\rho_k.$ Combining this with \eqref{parameter SAM + VASSO} also gives us \eqref{parameter general}, thereby verifying all the assumptions in Theorem~\ref{convergence IGD}. Consequently, all the convergence properties outlined in Theorem~\ref{convergence IGD} hold for Algorithm~\ref{general}.
\end{proof}
\section{Efficient normalized variants of SAM}\label{versions}
In this section, we list several efficient normalized variants of SAM from \citep{foret21,liu22,li23vasso,li2024friendly} that are special cases of Algorithm~\ref{general}. As a consequence, all the convergence properties in Theorem~\ref{convergence IGD} are satisfied for these methods.
\begin{algorithm}[H]\caption{\citep{foret21} Sharpness-Aware Minimization (SAM)}\label{SAM}
\setcounter{Step}{-1}
 \begin{Step}\rm
Choose $x^1\in\R^n,$ $\set{\rho_k}\subset [0,\infty)$, and $\set{t_k}\subset[0,\infty).$ For $k=1,2,\ldots,$ do the following:
\end{Step}
\begin{Step}\rm 
Set $x^{k+1}=x^k-t_k \nabla f\brac{x^k+\rho_k \frac{\nabla f(x^k)}{\norm{\nabla f(x^k)}}}$.
\end{Step}   
\end{algorithm}
\begin{algorithm}[H]\caption{\citep{liu22} Random Sharpness-Aware Minimization (RSAM)}\label{RSAM}
\setcounter{Step}{-1}
 \begin{Step}\rm
Choose $x^1\in\R^n,$  $\set{\rho_k}\subset [0,\infty)$, and $\set{t_k}\subset[0,\infty).$ For $k=1,2,\ldots,$ do the following:
\end{Step}
\begin{Step}
Construct a random vector $\Delta^k\in\R^n$ and set $g^k=\nabla f(x^k+\Delta^k).$ 
\end{Step}
\begin{Step}\rm 
Set $x^{k+1}=x^k-t_k \nabla f\brac{x^k+\rho_k \frac{\Delta^k+\lambda g^k}{\norm{\Delta^k+\lambda g^k}}}$.
\end{Step} 
\end{algorithm}

\begin{algorithm}[H]
\caption{\citep{li23vasso} Variance suppressed sharpness aware optimization (VaSSO) }\label{VaSSO}
   \setcounter{Step}{-1}
 \begin{Step}\rm
Choose $x^1\in\R^n$, $d^1\in\R^n$, $\set{\rho_k}\subset [0,\infty)$, $\set{t_k}\subset[0,\infty).$ For $k\ge 1,$ do the following:
\end{Step}
\begin{Step}\rm 
Set $d^{k}=(1-\theta)d^{k-1}+\theta \nabla f(x^k).$
\end{Step}   
\begin{Step}
Set $x^{k+1}=x^k-t_k \nabla f\brac{x^k+\rho_k \frac{d^k}{\norm{d^k}}}.$
\end{Step} 
\end{algorithm}

\begin{algorithm}[H]
\caption{\citep{li2024friendly} Friendly Sharpness-Aware Minimization (F-SAM) }\label{F-SAM}
   \setcounter{Step}{-1}
     \begin{Step}\rm
        Choose $x^1\in\R^n$, $d^1\in\R^n$, $m^1\in\R^n$, $\sigma\in\R$, $\set{\rho_k}\subset [0,\infty)$, $\set{t_k}\subset[0,\infty).$ For $k\ge 1$:
    \end{Step}
    \begin{Step}\rm 
        Set $m^{k}=(1-\theta)m^{k-1}+\theta \nabla f(x^k).$
    \end{Step}   
    \begin{Step}\rm 
        Set $d^{k}= \nabla f(x^k) - \sigma m^{k}.$
    \end{Step}   
    \begin{Step}
        Set $x^{k+1}=x^k-t_k \nabla f\brac{x^k+\rho_k \frac{d^k}{\norm{d^k}}}.$
    \end{Step} 
\end{algorithm}

\begin{remark}\rm\label{normalized variants remark}

It is clear that Algorithms~\ref{SAM}-\ref{F-SAM} are specializations of Algorithm~\ref{general} with $d^k=\nabla f(x^k)$ in Algorithm~\ref{SAM}, $d^k=\Delta^k+\lambda g^k$ in Algorithm~\ref{RSAM}, and $d^k$ constructed inductively for Algorithm~\ref{VaSSO} and Algorithm~\ref{F-SAM}.
\end{remark}

\section{Numerical experiments on SAM constant and SAM almost constant}\label{numerical support}
In this section, we present numerical results to support our claim in Remark~\ref{rmk no harm} that SAM with an almost constant perturbation radius $\rho_k=\frac{C}{k^p}$ for $p$ close to $0$, e.g., $p=0.001$, generates similar results to SAM with a constant perturbation radius $\rho=C$. To do so, we consider the function $f(x)=\sum_{i=1}^n\log(1+(Ax-b)_i^2)$, where $A$ is an $n\times n$ matrix, and $b$ is a vector in $\mathbb{R}^n$. In the experiment, we construct $A$ and $b$ randomly with $n\in{2,20,50,100}$. The methods considered in the experiment are GD with a diminishing step size, SAM with a diminishing step size and a constant perturbation radius of $0.1$, and lastly, SAM with a diminishing step size and a variable radius $\rho_k=\frac{C}{k^p}$, for $p\in{1,0.1,0.001}$. We refer to the case $p=0.001$ as the "almost constant" case, as $\rho_k=\frac{C}{k^p}$ is numerically similar to $C$ when we consider a small number of iterations. The diminishing step size is chosen as $t_k=(0.1/n)/k$ at the $k^{\rm th}$ iteration, where $n$ is the dimension of the problem. To make the plots clearer, we choose the initial point $x^1$ near the solution, which is $x^1=x^\infty+(0.1/n^2)\mathbf{1}_n$, where $x^\infty$ is a solution of $Ax=b$, and $\mathbf{1}_n$ is the all-ones vector in $\mathbb{R}^n$. All the algorithms are executed for $100n$ iterations. The results presented in Figure~\ref{fig:SAM nonconvex} show that SAM with a constant perturbation and SAM with an almost constant perturbation have the same behavior regardless of the dimension of the problem. This is simply because $\frac{C}{k^{0.001}}$ is almost the same as $C$. This also tells us that the convergence rate of these two versions of SAM is similar. Since SAM with a constant perturbation radius is always preferable in practice \citep{foret21,dai23}, this highlights the practicality of our development.

\begin{figure}[H]
    \centering
    \subfigure[$n=2$]{
        \includegraphics[width=1\linewidth]{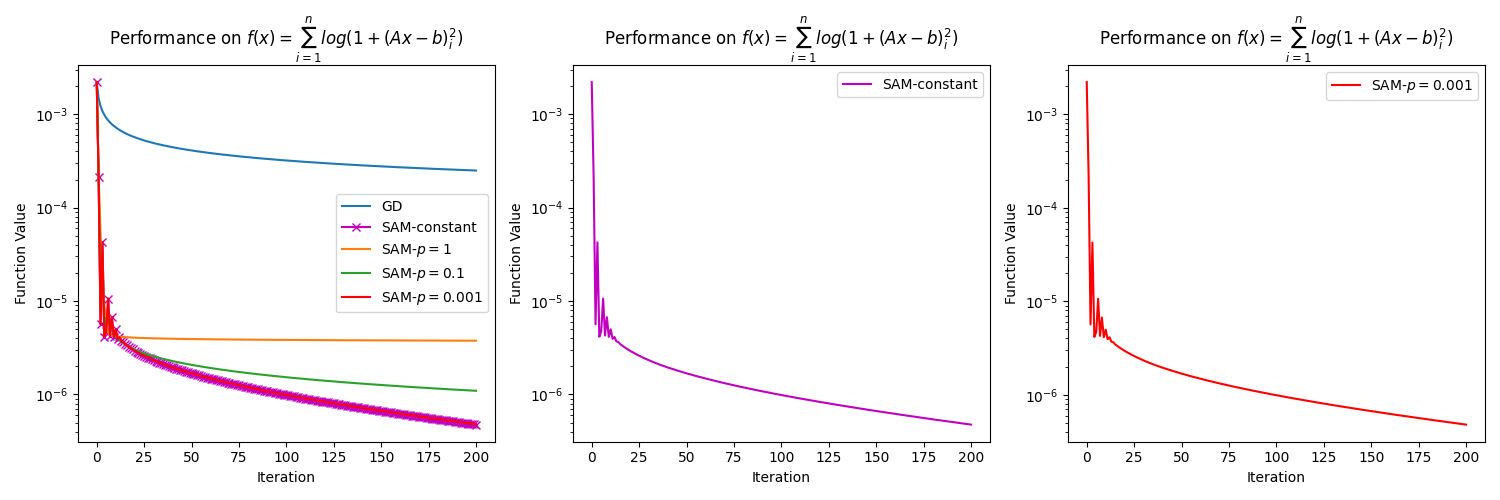}
        \label{fig:n_2}
    }
    \subfigure[$n=20$]{
        \includegraphics[width=1\linewidth]{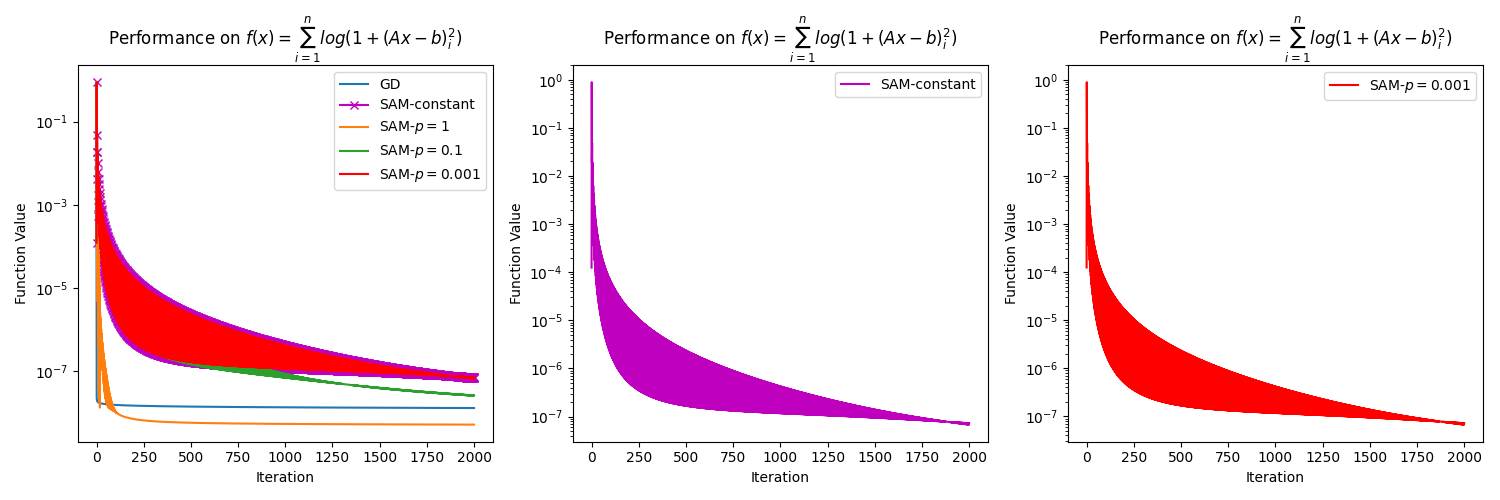}
        \label{fig:n_20}
    }
    \\
    \subfigure[$n=50$]{
        \includegraphics[width=1\linewidth]{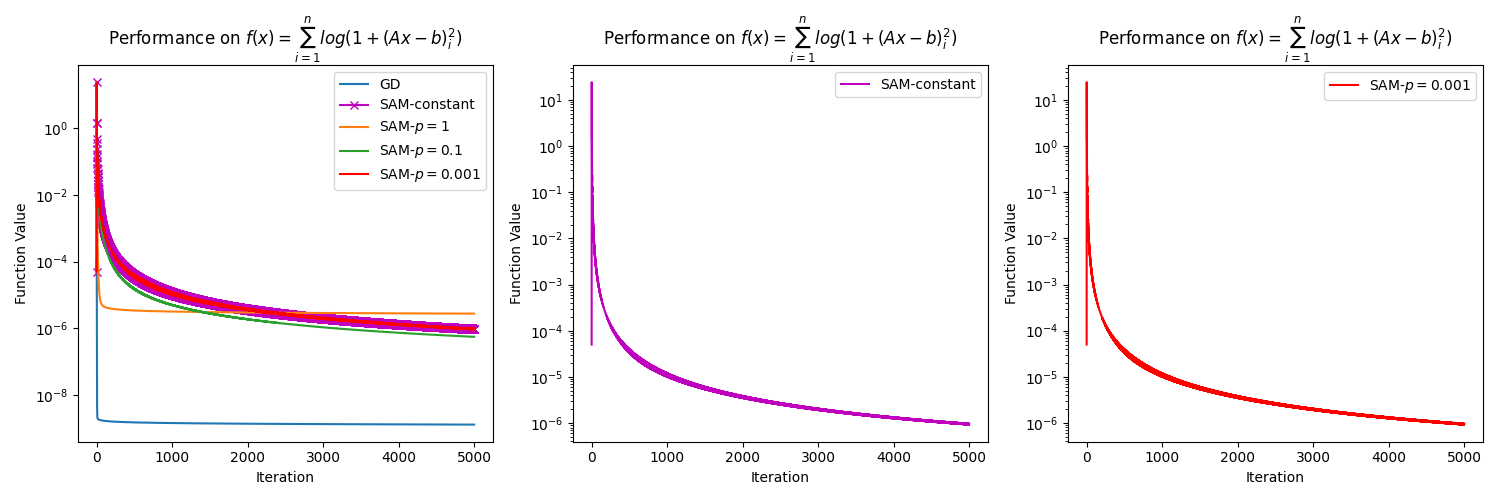}
        \label{fig:n_50}
    }
    \subfigure[$n=100$]{
        \includegraphics[width=1\linewidth]{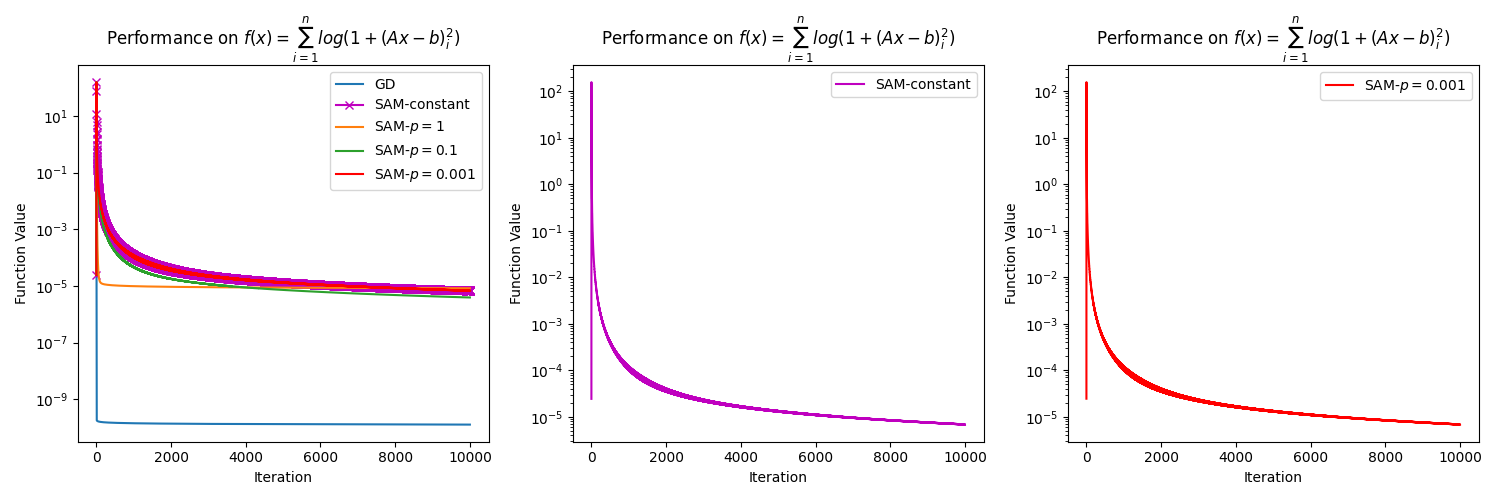}
        \label{fig:n_100}
    }
    \caption{SAM with constant perturbation and SAM almost constant perturbation}
    \label{fig:SAM nonconvex}
\end{figure}

\section{Numerical experiments on SAM with SGD without momentum as base optimizer}\label{nume addi}

\textbf{CIFAR-10, CIFAR-100, and Tiny ImageNet}. The training configurations for these datasets follow a similar structure to Section \ref{sec numerical}, excluding momentum, which we set to zero. The results in Table \ref{table: experiment without momentum} report test accuracy on CIFAR-10 and CIFAR-100. Table \ref{table: experiment tiny} shows the performance of SAM on momentum and without momentum settings. Each experiment is run once, and the highest accuracy for each column is highlighted in bold.

\begin{table}[H]\small
\centering
\caption{Additional Numerical Results on CIFAR-10, CIFAR-100 for SAM without momentum}
\label{table: experiment without momentum}

\begin{tblr}{
  column{even} = {r},
  column{3} = {r},
  column{5} = {r},
  column{7} = {r},
  cell{1}{2} = {c=3}{c},
  cell{1}{5} = {c=3}{c},
  hline{1-3,7} = {-}{},
}
             & CIFAR-10&         &         & CIFAR-100                       &         &         \\
Model & ResNet18& ResNet34& WRN28-10& ResNet18& ResNet34& WRN28-10\\
Constant     & 93.64 & 94.26  	& 93.04 & 72.07 & 72.57 & 71.11 \\
Diminish 1   & 88.87 & 89.79 & 86.81 & 65.99 & 67.04 & 51.31 \\
Diminish 2   & \textbf{94.56} & \textbf{94.44}& \textbf{93.66} & \textbf{74.24} & \textbf{74.95} & \textbf{74.23} \\
Diminish 3   & 90.84 & 91.23 & 88.70 & 69.69 & 70.54 & 60.64
\end{tblr}
\end{table}

\begin{table}[H]\small
\centering
\caption{Additional Numerical Results on TinyImagenet for SAM with and without momentum}
\label{table: experiment tiny}

\begin{tblr}{
  column{even} = {r},
  column{3} = {r},
  column{5} = {r},
  column{7} = {r},
  cell{1}{2} = {c=3}{c},
  cell{1}{5} = {c=3}{c},
  hline{1-3,7} = {-}{},
}
Tiny ImageNet & Momentum&         &         & Without Momentum                       &         &         \\
Model & ResNet18& ResNet34& WRN28-10& ResNet18& ResNet34\\
Constant   & 48.58 & 48.34 & 53.34 & 54.90 & 57.36  \\
Diminish 1 & 50.36 & 51.24 & 58.37 & 55.82 & 55.96   \\
Diminish 2 & 48.70 & 49.06 & 52.74 & 57.30 & \textbf{60.00} \\
Diminish 3 & \textbf{51.46} & \textbf{53.98} & \textbf{58.68} & \textbf{57.86} & 57.82 
\end{tblr}
\end{table}

\section{Additional Remarks}
\begin{remark} \rm \label{rmk satisfaction} Assumption~\ref{assu desi} is satisfied with constant $C=1$ for $\varphi(t)=Mt^{1-q}$ with $M>0$ and $q\in [0,1)$.
Indeed, taking any $x,y>0$ with $x+y<\eta$, we deduce that $(x+y)^q\le x^q+y^q,$ and hence
\begin{align*}
 [\varphi'(x+y)]^{-1}&=\frac{1}{M(1-q)}(x+y)^q\le \frac{1}{M(1-q)}(x^q+y^q)
= (\varphi'(x))^{-1}+(\varphi'(y))^{-1}.
\end{align*}
\end{remark}

\begin{remark}\label{not converge to 0}
Construct an example to demonstrate that the conditions in \eqref{parameter SAM + VASSO} do not require that ${\rho_k}$ converges to $0$. Let $L>0$ be a Lipschitz constant of $\nabla f$, let $C$ be a positive constant such that $C<2/L$, let $P\subset\N$ be the set of all perfect squares, let $t_k=\frac{1}{k}$ for all $k\in\N,p>0$, and let $\set{\rho_k}$ be constructed as follows:
\begin{align*}
\rho_k=\begin{cases}C,&k\in P,\\
  \frac{C}{k^p},&k\notin P, 
\end{cases}\;\text{ which yields }\;
{t_k}{\rho_k}=\begin{cases}\frac{C}{k},&k\in P,\\
  \frac{C}{k^{p+1}},&k\notin P.  
\end{cases}
\end{align*}
It is clear from the construction of $\set{\rho_k}$ that $\limsup_{k\rightarrow\infty}\rho_k=C>0$, which implies that $\set{\rho_k}$ does not convergence to $0.$ We also immediately deduce that $ \sum_{k=1}^\infty t_k=\infty,\;t_k\downarrow0,$ and $\limsup \rho_k=C<\frac{2}{L},$ which verifies the first three conditions in \eqref{parameter SAM + VASSO}. The last condition in \eqref{parameter SAM + VASSO} follows from the estimates
\begin{align*}
\sum_{k=1}^\infty {t_k}{\rho_k}&=\sum_{k\in P} {t_k}{\rho_k}+\sum_{k\notin P} {t_k}{\rho_k}\le \sum_{k\in P} \frac{C}{k}+\sum_{k\in \N}\frac{C}{k^{p+1}}\\
&=\sum_{k\in \N} \frac{C}{k^2}+\sum_{k\in \N}\frac{C}{k^{p+1}}<\infty.
\end{align*}
\end{remark}

\begin{remark}[on Assumption~\eqref{desing condi}]\rm\label{rmk desing condi}
Supposing that $\varphi(t)=Mt^{1-q}$ with $M>0,q\in(0,1)$ and letting $C=1/(M(1-q))$, we get that $(\varphi'(t))^{-1}=Ct^q\;\text{ for }\;t>0$ is an increasing function.
If $t_k:=\frac{1}{k}$ and $\varepsilon_k:=\frac{1}{k^p}$ with $p>0$, we have
\begin{align*}
\sum_{i=k}^\infty t_i\varepsilon_i= \sum_{i=k}^\infty \frac{1}{i^{1+p}}\le \int_{k}^\infty \frac{1}{x^{1+p}}dx=-\frac{1}{px^p}\big|_{k}^\infty=\frac{1}{pk^p},
\end{align*}
which yields the relationships
\begin{align*}
\brac{\varphi'\brac{\sum_{i=k}^\infty t_i\varepsilon_i}}^{-1}\le \brac{\varphi'\brac{\frac{1}{p(k+1)^p}}}^{-1}=\frac{C}{p^qk^{pq}}.
\end{align*}
Therefore, we arrive at the claimed conditions
\begin{align*}
\sum_{k=1}^\infty t_k\brac{\varphi'\brac{\sum_{i=k}^\infty t_i\varepsilon_i}}^{-1}\le \sum_{k=1}^\infty \frac{1}{k}\frac{C}{p^qk^{pq}}= \sum_{k=1}^\infty \frac{C}{p^qk^{1+pq}}<\infty.
\end{align*}
\end{remark}

\begin{remark}\rm \label{rmk comparison USAM} Let us finally compare the results presented in Theorem~\ref{theo IGDr} with that in \citet{maksym22}. All the convergence properties in \citet{maksym22} are considered for the class of $\C^{1,L}$ functions, which is more narrow than the class of $L$-descent functions examined in Theorem~\ref{theo IGDr}(i). Under the convexity of the objective function, the convergence of the sequences of the function values at \textit{averages of iteration} is established in \citet[Theorem~11]{maksym22}, which does not yield the convergence of either the function values, or the iterates, or the corresponding gradients. In the nonconvex case, we derive the stationarity of accumulation points, the convergence of the function value sequence, and the convergence of the gradient sequence in Theorem~\ref{theo IGDr}. Under the strong convexity of the objective function, the linear convergence of the sequence of iterate values is established \citet[Theorem~11]{maksym22}. On the other hand, our Theorem~\ref{theo IGDr} derives the convergence rates for the sequence of iterates, sequence of function values, and sequence of gradient under the KL property only, which covers many classes of nonconvex functions. Our convergence results address variable stepsizes and bounded radii, which also cover the case of constant stepsize and constant radii considered in \citet{maksym22}.
\end{remark}

\newpage
\section*{NeurIPS Paper Checklist}

\begin{enumerate}

\item {\bf Claims}
\item[] Question: Do the main claims made in the abstract and introduction accurately reflect the paper's contributions and scope?
\item[] Answer: \answerYes
\item[] Justification: The claims made in the abstract and introduction about convergence properties of SAM and its variants are presented in Section~\ref{sec SAM normal} and Section~\ref{sec uSAM}.
\item[] Guidelines:
\begin{itemize}
\item The answer NA means that the abstract and introduction do not include the claims made in the paper.
\item The abstract and/or introduction should clearly state the claims made, including the contributions made in the paper and important assumptions and limitations. A No or NA answer to this question will not be perceived well by the reviewers. 
\item The claims made should match theoretical and experimental results, and reflect how much the results can be expected to generalize to other settings. 
\item It is fine to include aspirational goals as motivation as long as it is clear that these goals are not attained by the paper. 
\end{itemize}

\item {\bf Limitations}
\item[] Question: Does the paper discuss the limitations of the work performed by the authors?
\item[] Answer: \answerYes
\item[] Justification: See Section~\ref{sec discuss}.
\item[] Guidelines:
\begin{itemize}
\item The answer NA means that the paper has no limitation while the answer No means that the paper has limitations, but those are not discussed in the paper. 
\item The authors are encouraged to create a separate "Limitations" section in their paper.
\item The paper should point out any strong assumptions and how robust the results are to violations of these assumptions (e.g., independence assumptions, noiseless settings, model well-specification, asymptotic approximations only holding locally). The authors should reflect on how these assumptions might be violated in practice and what the implications would be.
\item The authors should reflect on the scope of the claims made, e.g., if the approach was only tested on a few datasets or with a few runs. In general, empirical results often depend on implicit assumptions, which should be articulated.
\item The authors should reflect on the factors that influence the performance of the approach. For example, a facial recognition algorithm may perform poorly when image resolution is low or images are taken in low lighting. Or a speech-to-text system might not be used reliably to provide closed captions for online lectures because it fails to handle technical jargon.
\item The authors should discuss the computational efficiency of the proposed algorithms and how they scale with dataset size.
\item If applicable, the authors should discuss possible limitations of their approach to address problems of privacy and fairness.
\item While the authors might fear that complete honesty about limitations might be used by reviewers as grounds for rejection, a worse outcome might be that reviewers discover limitations that aren't acknowledged in the paper. The authors should use their best judgment and recognize that individual actions in favor of transparency play an important role in developing norms that preserve the integrity of the community. Reviewers will be specifically instructed to not penalize honesty concerning limitations.
\end{itemize}

\item {\bf Theory Assumptions and Proofs}
\item[] Question: For each theoretical result, does the paper provide the full set of assumptions and a complete (and correct) proof?
\item[] Answer: \answerYes
\item[] Justification: All the assumptions are given in the main text while the full proofs are provided in the appendices.
\item[] Guidelines:
\begin{itemize}
\item The answer NA means that the paper does not include theoretical results. 
\item All the theorems, formulas, and proofs in the paper should be numbered and cross-referenced.
\item All assumptions should be clearly stated or referenced in the statement of any theorems.
\item The proofs can either appear in the main paper or the supplemental material, but if they appear in the supplemental material, the authors are encouraged to provide a short proof sketch to provide intuition. 
\item Inversely, any informal proof provided in the core of the paper should be complemented by formal proofs provided in appendix or supplemental material.
\item Theorems and Lemmas that the proof relies upon should be properly referenced. 
\end{itemize}

\item {\bf Experimental Result Reproducibility}
\item[] Question: Does the paper fully disclose all the information needed to reproduce the main experimental results of the paper to the extent that it affects the main claims and/or conclusions of the paper (regardless of whether the code and data are provided or not)?
\item[] Answer: \answerYes{} % Replace by \answerYes{}, \answerNo{}, or \answerNA{}.
\item[] Justification: See Section \ref{sec numerical}
\item[] Guidelines:
\begin{itemize}
\item The answer NA means that the paper does not include experiments.
\item If the paper includes experiments, a No answer to this question will not be perceived well by the reviewers: Making the paper reproducible is important, regardless of whether the code and data are provided or not.
\item If the contribution is a dataset and/or model, the authors should describe the steps taken to make their results reproducible or verifiable. 
\item Depending on the contribution, reproducibility can be accomplished in various ways. For example, if the contribution is a novel architecture, describing the architecture fully might suffice, or if the contribution is a specific model and empirical evaluation, it may be necessary to either make it possible for others to replicate the model with the same dataset, or provide access to the model. In general. releasing code and data is often one good way to accomplish this, but reproducibility can also be provided via detailed instructions for how to replicate the results, access to a hosted model (e.g., in the case of a large language model), releasing of a model checkpoint, or other means that are appropriate to the research performed.
\item While NeurIPS does not require releasing code, the conference does require all submissions to provide some reasonable avenue for reproducibility, which may depend on the nature of the contribution. For example
\begin{enumerate}
\item If the contribution is primarily a new algorithm, the paper should make it clear how to reproduce that algorithm.
\item If the contribution is primarily a new model architecture, the paper should describe the architecture clearly and fully.
\item If the contribution is a new model (e.g., a large language model), then there should either be a way to access this model for reproducing the results or a way to reproduce the model (e.g., with an open-source dataset or instructions for how to construct the dataset).
\item We recognize that reproducibility may be tricky in some cases, in which case authors are welcome to describe the particular way they provide for reproducibility. In the case of closed-source models, it may be that access to the model is limited in some way (e.g., to registered users), but it should be possible for other researchers to have some path to reproducing or verifying the results.
\end{enumerate}
\end{itemize}

\item {\bf Open access to data and code}
\item[] Question: Does the paper provide open access to the data and code, with sufficient instructions to faithfully reproduce the main experimental results, as described in supplemental material?
\item[] Answer: \answerYes{}.
\item[] Justification: See the supplementary material. 
\item[] Guidelines:
\begin{itemize}
\item The answer NA means that paper does not include experiments requiring code.
\item Please see the NeurIPS code and data submission guidelines (\url{https://nips.cc/public/guides/CodeSubmissionPolicy}) for more details.
\item While we encourage the release of code and data, we understand that this might not be possible, so “No” is an acceptable answer. Papers cannot be rejected simply for not including code, unless this is central to the contribution (e.g., for a new open-source benchmark).
\item The instructions should contain the exact command and environment needed to run to reproduce the results. See the NeurIPS code and data submission guidelines (\url{https://nips.cc/public/guides/CodeSubmissionPolicy}) for more details.
\item The authors should provide instructions on data access and preparation, including how to access the raw data, preprocessed data, intermediate data, and generated data, etc.
\item The authors should provide scripts to reproduce all experimental results for the new proposed method and baselines. If only a subset of experiments are reproducible, they should state which ones are omitted from the script and why.
\item At submission time, to preserve anonymity, the authors should release anonymized versions (if applicable).
\item Providing as much information as possible in supplemental material (appended to the paper) is recommended, but including URLs to data and code is permitted.
\end{itemize}

\item {\bf Experimental Setting/Details}
\item[] Question: Does the paper specify all the training and test details (e.g., data splits, hyperparameters, how they were chosen, type of optimizer, etc.) necessary to understand the results?
\item[] Answer: \answerYes{} % Replace by \answerYes{}, \answerNo{}, or \answerNA{}.
\item[] Justification: See Section \ref{sec numerical}
\item[] Guidelines:
\begin{itemize}
\item The answer NA means that the paper does not include experiments.
\item The experimental setting should be presented in the core of the paper to a level of detail that is necessary to appreciate the results and make sense of them.
\item The full details can be provided either with the code, in appendix, or as supplemental material.
\end{itemize}

\item {\bf Experiment Statistical Significance}
\item[] Question: Does the paper report error bars suitably and correctly defined or other appropriate information about the statistical significance of the experiments?
\item[] Answer: \answerYes{} % Replace by \answerYes{}, \answerNo{}, or \answerNA{}.
\item[] Justification: See Section \ref{sec numerical}
\item[] Guidelines:
\begin{itemize}
\item The answer NA means that the paper does not include experiments.
\item The authors should answer "Yes" if the results are accompanied by error bars, confidence intervals, or statistical significance tests, at least for the experiments that support the main claims of the paper.
\item The factors of variability that the error bars are capturing should be clearly stated (for example, train/test split, initialization, random drawing of some parameter, or overall run with given experimental conditions).
\item The method for calculating the error bars should be explained (closed form formula, call to a library function, bootstrap, etc.)
\item The assumptions made should be given (e.g., Normally distributed errors).
\item It should be clear whether the error bar is the standard deviation or the standard error of the mean.
\item It is OK to report 1-sigma error bars, but one should state it. The authors should preferably report a 2-sigma error bar than state that they have a 96\% CI, if the hypothesis of Normality of errors is not verified.
\item For asymmetric distributions, the authors should be careful not to show in tables or figures symmetric error bars that would yield results that are out of range (e.g. negative error rates).
\item If error bars are reported in tables or plots, The authors should explain in the text how they were calculated and reference the corresponding figures or tables in the text.
\end{itemize}

\item {\bf Experiments Compute Resources}
\item[] Question: For each experiment, does the paper provide sufficient information on the computer resources (type of compute workers, memory, time of execution) needed to reproduce the experiments?
\item[] Answer: \answerYes{} % Replace by \answerYes{}, \answerNo{}, or \answerNA{}.
\item[] Justification: We mentioned an RTX 3090 computer worker.
\item[] Guidelines:
\begin{itemize}
\item The answer NA means that the paper does not include experiments.
\item The paper should indicate the type of compute workers CPU or GPU, internal cluster, or cloud provider, including relevant memory and storage.
\item The paper should provide the amount of compute required for each of the individual experimental runs as well as estimate the total compute. 
\item The paper should disclose whether the full research project required more compute than the experiments reported in the paper (e.g., preliminary or failed experiments that didn't make it into the paper). 
\end{itemize}

\item {\bf Code Of Ethics}
\item[] Question: Does the research conducted in the paper conform, in every respect, with the NeurIPS Code of Ethics \url{https://neurips.cc/public/EthicsGuidelines}?
\item[] Answer: \answerYes{} % Replace by \answerYes{}, \answerNo{}, or \answerNA{}.
\item[] Justification: Our paper conform with the NeurIPS Code of Ethics.
\item[] Guidelines:
\begin{itemize}
\item The answer NA means that the authors have not reviewed the NeurIPS Code of Ethics.
\item If the authors answer No, they should explain the special circumstances that require a deviation from the Code of Ethics.
\item The authors should make sure to preserve anonymity (e.g., if there is a special consideration due to laws or regulations in their jurisdiction).
\end{itemize}

\item {\bf Broader Impacts}
\item[] Question: Does the paper discuss both potential positive societal impacts and negative societal impacts of the work performed?
\item[] Answer: \answerNA{} % Replace by \answerYes{}, \answerNo{}, or \answerNA{}.
\item[] Justification: This paper presents work whose goal is to advance the field of Machine Learning. There are many potential societal consequences of our work, none of which are specifically highlighted here.
\item[] Guidelines:
\begin{itemize}
\item The answer NA means that there is no societal impact of the work performed.
\item If the authors answer NA or No, they should explain why their work has no societal impact or why the paper does not address societal impact.
\item Examples of negative societal impacts include potential malicious or unintended uses (e.g., disinformation, generating fake profiles, surveillance), fairness considerations (e.g., deployment of technologies that could make decisions that unfairly impact specific groups), privacy considerations, and security considerations.
\item The conference expects that many papers will be foundational research and not tied to particular applications, let alone deployments. However, if there is a direct path to any negative applications, the authors should point it out. For example, it is legitimate to point out that an improvement in the quality of generative models could be used to generate deepfakes for disinformation. On the other hand, it is not needed to point out that a generic algorithm for optimizing neural networks could enable people to train models that generate Deepfakes faster.
\item The authors should consider possible harms that could arise when the technology is being used as intended and functioning correctly, harms that could arise when the technology is being used as intended but gives incorrect results, and harms following from (intentional or unintentional) misuse of the technology.
\item If there are negative societal impacts, the authors could also discuss possible mitigation strategies (e.g., gated release of models, providing defenses in addition to attacks, mechanisms for monitoring misuse, mechanisms to monitor how a system learns from feedback over time, improving the efficiency and accessibility of ML).
\end{itemize}

\item {\bf Safeguards}
\item[] Question: Does the paper describe safeguards that have been put in place for responsible release of data or models that have a high risk for misuse (e.g., pretrained language models, image generators, or scraped datasets)?
\item[] Answer: \answerNA{}  % Replace by \answerYes{}, \answerNo{}, or \answerNA{}.
\item[] Justification: Our paper is a theoretical study.
\item[] Guidelines:
\begin{itemize}
\item The answer NA means that the paper poses no such risks.
\item Released models that have a high risk for misuse or dual-use should be released with necessary safeguards to allow for controlled use of the model, for example by requiring that users adhere to usage guidelines or restrictions to access the model or implementing safety filters. 
\item Datasets that have been scraped from the Internet could pose safety risks. The authors should describe how they avoided releasing unsafe images.
\item We recognize that providing effective safeguards is challenging, and many papers do not require this, but we encourage authors to take this into account and make a best faith effort.
\end{itemize}

\item {\bf Licenses for existing assets}
\item[] Question: Are the creators or original owners of assets (e.g., code, data, models), used in the paper, properly credited and are the license and terms of use explicitly mentioned and properly respected?
\item[] Answer: \answerYes{} % Replace by \answerYes{}, \answerNo{}, or \answerNA{}.
\item[] Justification: See Section~\ref{sec numerical}.
\item[] Guidelines:
\begin{itemize}
\item The answer NA means that the paper does not use existing assets.
\item The authors should cite the original paper that produced the code package or dataset.
\item The authors should state which version of the asset is used and, if possible, include a URL.
\item The name of the license (e.g., CC-BY 4.0) should be included for each asset.
\item For scraped data from a particular source (e.g., website), the copyright and terms of service of that source should be provided.
\item If assets are released, the license, copyright information, and terms of use in the package should be provided. For popular datasets, \url{paperswithcode.com/datasets} has curated licenses for some datasets. Their licensing guide can help determine the license of a dataset.
\item For existing datasets that are re-packaged, both the original license and the license of the derived asset (if it has changed) should be provided.
\item If this information is not available online, the authors are encouraged to reach out to the asset's creators.
\end{itemize}

\item {\bf New Assets}
\item[] Question: Are new assets introduced in the paper well documented and is the documentation provided alongside the assets?
\item[] Answer: \answerNA{} % Replace by \answerYes{}, \answerNo{}, or \answerNA{}.
\item[] Justification: Our paper is a theoretical study.
\item[] Guidelines: 
\begin{itemize}
\item The answer NA means that the paper does not release new assets.
\item Researchers should communicate the details of the dataset/code/model as part of their submissions via structured templates. This includes details about training, license, limitations, etc. 
\item The paper should discuss whether and how consent was obtained from people whose asset is used.
\item At submission time, remember to anonymize your assets (if applicable). You can either create an anonymized URL or include an anonymized zip file.
\end{itemize}

\item {\bf Crowdsourcing and Research with Human Subjects}
\item[] Question: For crowdsourcing experiments and research with human subjects, does the paper include the full text of instructions given to participants and screenshots, if applicable, as well as details about compensation (if any)? 
\item[] Answer: \answerNA{} % Replace by \answerYes{}, \answerNo{}, or \answerNA{}.
\item[] Justification: 
\item[] Guidelines:
\begin{itemize}
\item The answer NA means that the paper does not involve crowdsourcing nor research with human subjects.
\item Including this information in the supplemental material is fine, but if the main contribution of the paper involves human subjects, then as much detail as possible should be included in the main paper. 
\item According to the NeurIPS Code of Ethics, workers involved in data collection, curation, or other labor should be paid at least the minimum wage in the country of the data collector. 
\end{itemize}

\item {\bf Institutional Review Board (IRB) Approvals or Equivalent for Research with Human Subjects}
\item[] Question: Does the paper describe potential risks incurred by study participants, whether such risks were disclosed to the subjects, and whether Institutional Review Board (IRB) approvals (or an equivalent approval/review based on the requirements of your country or institution) were obtained?
\item[] Answer: \answerNA{} % Replace by \answerYes{}, \answerNo{}, or \answerNA{}.
\item[] Justification: 
\item[] Guidelines:
\begin{itemize}
\item The answer NA means that the paper does not involve crowdsourcing nor research with human subjects.
\item Depending on the country in which research is conducted, IRB approval (or equivalent) may be required for any human subjects research. If you obtained IRB approval, you should clearly state this in the paper. 
\item We recognize that the procedures for this may vary significantly between institutions and locations, and we expect authors to adhere to the NeurIPS Code of Ethics and the guidelines for their institution. 
\item For initial submissions, do not include any information that would break anonymity (if applicable), such as the institution conducting the review.
\end{itemize}

\end{enumerate}

\end{document}